\numberwithin{equation}{section}
\theoremstyle{plain}
\newtheorem{theorem}[equation]{Theorem}
\newtheorem{proposition}[equation]{Proposition}
\newtheorem{lemma}[equation]{Lemma} 
\newtheorem{corollary}[equation]{Corollary}
\theoremstyle{definition}
\newtheorem{definition}[equation]{Definition}
\newtheorem{example}[equation]{Example}
\theoremstyle{remark}
\newtheorem{remark}[equation]{Remark} 
\newtheorem*{ack}{Acknowledgements}
\newcommand{\agr}{\operatorname{gr}}
\newcommand{\bs}[1]{\boldsymbol{#1}}
\newcommand{\dcoh}[1]{\operatorname{D}^b(\operatorname{coh} #1)}
\newcommand{\fm}{\mathfrak{m}}
\newcommand{\hh}{\operatorname{H}}
\newcommand{\length}{\operatorname{length}}
\newcommand{\lotimes}{\otimes^{\mathbf{L}}}
\newcommand{\minspec}{\operatorname{minSpec}}
\newcommand{\proj}{\operatorname{Proj}}
\newcommand{\rank}{\operatorname{rank}}
\newcommand{\rees}{\mathscr{R}}
\newcommand{\spec}{\operatorname{Spec}}
\newcommand{\thick}{\operatorname{thick}}
 \newcommand{\bbp}{\mathbb{P}}
 \newcommand{\bbQ}{\mathbb{Q}}
 \newcommand{\bbZ}{\mathbb{Z}}
\newcommand{\mbf}[1]{\mathbf{#1}}
\newcommand{\mcal}[1]{\mathcal{#1}}
\newcommand{\mcend}{\mcal End}
\newcommand{\mce}{\mcal E}
\newcommand{\mcf}{\mcal F}
\newcommand{\mcg}{\mcal G}
\newcommand{\mch}{\mcal H}
\newcommand{\mci}{\mcal I}
\newcommand{\mck}{\mcal K}
\newcommand{\mcl}{\mcal L}
\newcommand{\mco}{\mcal O}
\newcommand{\mcu}{\mcal U}
 \newcommand{\chr}{\operatorname{char}}
  \newcommand{\fp}{\mathfrak{p}}
  \newcommand{\fq}{\mathfrak{q}}
\begin{document}

\title[Ulrich modules]{Ulrich modules over \\ local rings of dimension two}

\author[Iyengar]{Srikanth B. Iyengar}
\address{
Department of Mathematics\\
University of Utah\\ 
Salt Lake City, UT 84112\\ 
U.S.A.}

\author[Ma]{Linquan Ma}
\address{Department of Mathematics\\
Purdue University\\
West Lafayette, IN 47907\\
U.S.A.}

\author[Walker]{Mark E.~Walker}
\address{Department of Mathematics\\
University of Nebraska\\
Lincoln, NE 68588\\
U.S.A. }

\begin{abstract}
      It is proved that Ulrich modules exist for a large class of local rings of dimension two. This complements earlier work of the authors and Ziquan Zhuang that described complete intersection domains of dimension two that admit no Ulrich modules. As an application, it is proved that, for this class of rings, the length of a nonzero module of finite projective dimension is at least the multiplicity of the local ring.
     \end{abstract}

\date{\today}

\keywords{Cohen-Macaulay ring, module of finite projective dimension, Ulrich module, Ulrich sheaf}

\subjclass[2020]{13C13 (primary); 13H10, 13C14, 14F06  (secondary)}

\maketitle

\section{Introduction}
This paper establishes the existence of Ulrich modules for a large class of local rings of dimension two.  Throughout $(R, \fm, k)$ denotes a local noetherian ring $R$ with maximal ideal $\fm$ and residue field $k = R/\fm$; in this introduction we assume $k$ is infinite. Let $B = \proj \left(\bigoplus \fm^n\right)$ denote the blow-up of $\spec R$ at its closed
point, $p: B \to \spec R$ the canonical projection, and $E = \proj \left( \bigoplus \fm^n/\fm^{n+1} \right)$ the fiber of $p$ over $\{\fm\}$.

\begin{theorem} 
\label{ithm:existence} In the setup above, $R$ has an Ulrich module if and only if there exists an
  Ulrich sheaf $\mcu$ on $E$ that extends to a coherent sheaf $\mcf$ on $B$; in this case, $p_* \mcf$ is an
  Ulrich $R$-module.
\end{theorem}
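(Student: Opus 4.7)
The plan is to set up a dictionary between coherent sheaves on $B$ and finitely generated graded modules over the Rees algebra $\rees := \bigoplus_{n\geq 0}\fm^n$, and likewise between coherent sheaves on $E$ and finitely generated graded modules over the associated graded ring $A := \bigoplus_{n\geq 0} \fm^n/\fm^{n+1} = \rees/\fm\rees$. Under this dictionary, restriction $\mcf \mapsto \mcf|_E$ corresponds to base change along $\rees \twoheadrightarrow A$, while $p_*$ recovers, up to Serre vanishing in large twist, the degree-zero part.

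For the ``only if'' direction, given an Ulrich $R$-module $M$, I form the Rees module $\bigoplus_{n\geq 0}\fm^n M$, a finitely generated graded $\rees$-module. The associated coherent sheaf $\mcf$ on $B$ satisfies $p_*\mcf \cong M$, and its restriction to $E$ is the sheaf $\mcu$ attached to the associated graded module $\bigoplus_n \fm^n M/\fm^{n+1}M$ over $A$. Since $M$ is Ulrich, it is MCM and $\fm M = (\mbf{x})M$ for a minimal reduction $\mbf{x} = (x_1,x_2)$ of $\fm$; hence $\fm^n M = (\mbf{x})^n M$ for all $n$, and the standard associated-graded computation for the $M$-regular sequence $x_1,x_2$ yields $\bigoplus_n \fm^n M/\fm^{n+1}M \cong (M/\fm M) \otimes_k k[T_1,T_2]$ as a graded $k[T_1,T_2]$-module (with $T_i$ the initial form of $x_i$). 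This freeness over $k[T_1,T_2] \subset A$ is precisely the Ulrich condition on $\mcu$.

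For the ``if'' direction, set $M := p_*\mcf$. Tensoring $0 \to \mco_B(1) \to \mco_B \to \mco_E \to 0$ with $\mcf(n)$ and taking cohomology, the Ulrich hypothesis on $\mcu$ supplies the vanishing of $\hh^i(E,\mcu(n))$ needed to (i)~ensure the higher direct images $\mathbf{R}^i p_*\mcf(n)$ vanish for $i \geq 1$, (ii)~identify $\fm^n M/\fm^{n+1}M$ with $\hh^0(E,\mcu(n))$ for every $n$, and (iii)~conclude $\depth M \geq 2$. From (ii), $\bigoplus_n \fm^nM/\fm^{n+1}M$ is free of some rank $s$ over $k[T_1,T_2]$, and a standard Hilbert--Samuel computation in dimension two then yields $\mu(M) = s = e(M)$, so $M$ is Ulrich. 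The main technical obstacle will be the cohomological bookkeeping in (i)--(iii), where the Ulrich hypothesis on $E$ must supply precisely the vanishing needed to glue the twists together.
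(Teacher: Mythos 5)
Your strategy for the ``if'' direction --- descending induction on twists using the sequences $0\to\mcf(n+1)\to\mcf(n)\to j_*\mcu(n)\to 0$ to control $p_*\mcf(n)$, then reading off the associated graded module and checking $\nu(M)=e(M)$ numerically --- is genuinely different from the paper's. The paper instead pulls back the Koszul complex $K$ on a minimal reduction of $\fm$ to $B$, shows the homology sheaves of $\mck=p^*K$ are pushforwards of vector bundles on $E$ generating the same thick subcategory as $\mco_E(-1),\dots,\mco_E(1-d)$, and concludes via the projection formula and the criterion that $F$ is Ulrich iff $K\otimes_R F$ is a $k$-vector space. That route packages the maximal Cohen--Macaulay condition and the numerical condition into a single vanishing statement and is uniform in the dimension, whereas your argument is written only for $d=2$ even though the theorem is stated for arbitrary local rings.

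Beyond the dimension restriction, two steps of your outline have genuine gaps. First, step (ii): the twisted sequences identify $\hh^0(B,\mcf(n))/\hh^0(B,\mcf(n+1))$ with $\hh^0(E,\mcu(n))$, but they do \emph{not} identify $\hh^0(B,\mcf(n))$ with $\fm^n M$. What you actually obtain is a filtration $M_n\coloneqq \hh^0(B,\mcf(n))$ of $M$ with $\fm M_n\subseteq M_{n+1}$ and equality only for $n\gg 0$; already for $\mcf=\mco_B$ the module $\hh^0(B,\fm^n\mco_B)$ is in general strictly larger than $\fm^n$ (it is governed by the integral closure of $\fm^n$). So ``$\mu(M)=s$'' does not follow as stated. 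The argument is salvageable: an $\fm$-stable filtration still computes $e(M)=s$, and $\fm M\subseteq M_1$ gives $\nu(M)\ge \dim_k(M/M_1)=s=e(M)$, which combined with the general inequality $\nu\le e$ for maximal Cohen--Macaulay modules closes the loop --- but that detour is necessary and absent. Second, step (iii), $\depth M\ge 2$, is asserted with no supporting argument and does not follow from the twisted sequences alone; one must, for instance, show $\hh^1_{\fm}(M)=0$ by proving that $\hh^0(B,\mcf)\to\hh^0(B\setminus E,\mcf)$ is surjective, which rests on $\hh^1_E(B,\mcf)\cong\varinjlim_n \hh^0(B,\mcf(-n)/\mcf)=0$; this uses both the injectivity of $\sigma\colon\mcf(1)\to\mcf$ (the extension hypothesis) and the vanishing $\hh^0(E,\mcu(t))=0$ for $t<0$ (a consequence of Ulrichness). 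Your sketch of the ``only if'' direction is essentially the standard argument that the paper cites from earlier work, and is fine.
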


By $\mcu$ extends to $\mcf$ we mean that the former is the derived pullback of the later; see Section~\ref{se:Ulrich-things}.  It was previously known that an Ulrich $R$-module gives rise to an Ulrich sheaf on $E$ that extends to $B$.  Using this fact, in our joint work with Zhuang~\cite{Iyengar/Ma/Walker/Zhuang:2024}, we constructed examples of two dimensional rings that admit no Ulrich modules. The main point of the current paper is to use the converse to establish a class of two dimensional rings for which Ulrich modules do exist:

\begin{theorem}
\label{ithm:dim2}
Let $R$ be a complete local ring that is equidimensional of dimension two. If the $k$-scheme $E$ is geometrically reduced, 
then $R$ admits an Ulrich module that is locally free of constant rank on the punctured spectrum $\spec R \setminus \{\fm\}$. 
\end{theorem}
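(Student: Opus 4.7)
The plan is to apply Theorem~\ref{ithm:existence}: it suffices to produce an Ulrich sheaf $\mcu$ on $E$ together with a coherent sheaf $\mcf$ on $B$ satisfying $\mathbf{L}\iota^*\mcf\cong\mcu$, where $\iota\colon E\hookrightarrow B$ is the inclusion. Since $p$ restricts to an isomorphism $B\setminus E\cong\spec R\setminus\{\fm\}$, arranging $\mcf$ to be locally free of constant rank in a neighborhood of $E$ will force the resulting Ulrich module $p_*\mcf$ to be locally free of constant rank on the punctured spectrum.

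First I would construct a locally free Ulrich sheaf $\mcu$ on $E$. Under the hypotheses, $E$ is a one-dimensional projective $k$-scheme, polarized by the tautological sheaf $\mco_E(1)$ from the Proj construction. Geometric reducedness of $E$ together with the infiniteness of $k$ ensures that a general pencil in $|\mco_E(1)|$ defines a finite, flat, generically étale morphism $\pi\colon E\to\bbp^1_k$. From $\pi$ one then produces an Ulrich vector bundle $\mcu$ on $E$ using the classification of vector bundles on $\bbp^1_k$: the essential content is to equip a free $\mco_{\bbp^1}$-module with a locally free module structure over the $\mco_{\bbp^1}$-algebra $\pi_*\mco_E$.

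Next I would extend $\mcu$ along $\iota$. The divisor $E$ is cut out in $B$ by the invertible ideal sheaf $\mci_E=\fm\mco_B$, so the infinitesimal neighborhoods $E_n := V(\mci_E^{n+1})$ form a tower of square-zero thickenings. Because $\mcu$ is locally free on $E$, the obstruction to extending a locally free lift from $E_{n-1}$ to $E_n$ lies in $H^2\bigl(E,\,\mcend(\mcu)\otimes \mci_E^n/\mci_E^{n+1}\bigr)$, which vanishes since $\dim E = 1$; so a compatible tower of locally free lifts $\{\mcf_n\}$ exists. Completeness of $R$ and properness of $p\colon B\to\spec R$ then allow Grothendieck's formal existence theorem to algebraize this formal coherent sheaf to a coherent sheaf $\mcf$ on $B$ with $\iota^*\mcf\cong\mcu$, and local freeness of each $\mcf_n$ translates to local freeness of $\mcf$ on a Zariski neighborhood of $E$; in particular $\mathbf{L}\iota^*\mcf\cong\mcu$ in the derived category.

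Theorem~\ref{ithm:existence} then produces the Ulrich module $p_*\mcf$, and its locally free, constant-rank behavior on the punctured spectrum follows by transporting the local freeness of $\mcf$ near $E$ through the isomorphism $p\colon B\setminus E\cong\spec R\setminus\{\fm\}$. I expect the principal obstacle to be the first step: for a smooth irreducible $E$ an Ulrich line bundle is easy to find via a Riemann--Roch count, but for a reducible or singular $E$ one must build a genuine higher-rank Ulrich vector bundle, and this is exactly where the hypothesis of geometric reducedness becomes essential, as it makes the general-pencil construction yield a finite, flat, generically étale morphism to $\bbp^1_k$ from which the construction can proceed.
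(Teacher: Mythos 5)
Your overall architecture matches the paper's: produce an Ulrich bundle on $E$, extend it to $B$ by killing the $H^2$ obstructions along the infinitesimal neighborhoods $E_n$ and algebraizing via Grothendieck's existence theorem, then invoke Theorem~\ref{ithm:existence}. The extension step is essentially the paper's Proposition~\ref{pr:lift-obstruction} and Corollary~\ref{cor:no-obstruction}, and is fine (one small point: to get constant rank on all of the punctured spectrum you need $\mcf$ locally free on all of $B$, not just ``near $E$''; this does follow, because the non-locally-free locus is closed and disjoint from the closed fiber, hence has empty image under the proper map $p$ since every nonempty closed subset of $\spec R$ contains $\fm$ --- the paper spells this out and you should too).

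The genuine gap is in your first step, which is the heart of the theorem. Having chosen a linear Noether normalization $\pi\colon E\to\bbp^1_k$, the statement that some $\mco_{\bbp^1}^r$ carries a locally free $\pi_*\mco_E$-module structure is, by the projection-formula lemma in Section~\ref{se:Ulrich-things}, \emph{exactly equivalent} to the existence of an Ulrich bundle on $E$; you have restated the problem, not solved it. Neither the classification of bundles on $\bbp^1_k$ nor generic \'etaleness of $\pi$ produces such a module structure: for instance $\pi_*\mco_E$ itself is almost never free, and there is no canonical Ulrich sheaf attached to a generically \'etale cover. The paper's actual argument (Lemma~\ref{lem3}) is different and uses reducedness in a different way: over $\overline{k}$ one starts with a line bundle $\mcl$ with $\hh^1=0$ and, as long as $\hh^0\neq 0$, picks a regular closed point at which a given nonzero section does not vanish (this is where reducedness and equidimensionality enter --- the zero locus of a nonzero section is a proper closed subset, and regular points are dense) and replaces $\mcl$ by the kernel of $\mcl\twoheadrightarrow i_*i^*\mcl$, dropping $h^0$ by one while keeping $h^1=0$; iterating yields a line bundle with no cohomology, hence an Ulrich line bundle, and the general case follows by descending to a finite extension $\ell/k$ and pushing forward (giving rank $[\ell:k]$). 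Note also that over $\overline{k}$ a line bundle suffices even for reducible or singular $E$, contrary to your closing remark, and that you need to record why $E$ is equidimensional (it follows from $R$ being complete, hence universally catenary, and equidimensional); without equidimensionality your pencil does not give a finite \emph{flat} map and the paper's point-subtraction step also fails.
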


We establish this theorem by proving that any $E$ as above admits an Ulrich \emph{bundle}, and verifying that any bundle of $E$ extends to $B$, so Theorem~\ref{ithm:existence} applies.

 In Theorem~\ref{ithm:dim2}, the assumption that $E$ is reduced is necessary: the counter-examples in \cite{Iyengar/Ma/Walker/Zhuang:2024} to the existence of Ulrich modules arise from situations where $E$ is an infinitesimal thickening of a smooth curve. The proof of Theorem~\ref{ithm:dim2}, sketched in the previous paragraph, shows that, in fact, $E$ admits no Ulrich bundles.     

 One consequence of preceding theorem is that the Length Conjecture from \cite{Iyengar/Ma/Walker:2022} holds for  the class of local rings covered by it; see Theorem~\ref{thm:LC}.

\section{Ulrich modules and Ulrich sheaves} 
\label{se:Ulrich-things}
In this section we recall standard facts concerning Ulrich modules and  Ulrich sheaves required in the sequel.

\subsection*{Ulrich modules}
Let $(R, \fm, k)$ be a noetherian local ring, set $d \coloneqq \dim R$ and
let $M$ be a finitely generated $R$-module. The minimal number of
generators of $M$ is written as $\nu_R(M)$
and its multiplicity is written as $e_R(M)$; that is, 
\[
\nu_R(M) = \rank_k(M/\fm M)\quad \text{and}\quad 
e_R(M) = \lim_{n \to \infty} \frac{d!\, \cdot \length_R(M/\fm^nM)}{n^{d}}.
\]
When $M$ is a maximal Cohen-Macaulay, we have $\nu_R(M)\le e_R(M)$; see, for instance, \cite[Section~3]{Ulrich:1984}. An
$R$-module $M$ is an \emph{Ulrich module} if it is nonzero, maximal Cohen-Macaulay, and satisfies $\nu_R(M)= e_R(M)$.

When the field $k$ is infinite one can find a system of parameters $\bs x\coloneqq x_1, \dots, x_d$ for $R$ that
generate a reduction of $\fm$; that is to say, $(\bs x) \fm^j = \fm^{j+1}$ for $j \gg 0$. The ideal $(\bs x)$ is said to
be a \emph{minimal reduction} of $\fm$. In this case, a non-zero finitely generated $R$-module $M$ is Ulrich if and only
if it is maximal Cohen-Macaulay and $(\bs x) M = \fm M$.

It is convenient to extend the notion of Ulrich modules to complexes: We say that an $R$-complex $F$ is an \emph{Ulrich module} provided $\hh_i(F) = 0$ for all $i \ne 0$ and the $R$-module $\hh_0(F)$ is Ulrich. Similarly, $F$ is a \emph{$k$-vector space} provided $\hh_i(F) \ne 0$ for all $i \ne 0$ and $\hh_0(F)$ is annihilated by the maximal ideal of $R$.

Let $K$ be the Koszul complex on a system of parameters generating a minimal reduction of $\fm$. 
A non-zero, finitely generated $R$-module $M$ is Ulrich if and only if $K \otimes_R M$ is a $k$-vector space.
This fact generalizes to complexes:

\begin{lemma} 
\label{le:Ucomplex} 
Let $R$ be a local ring,  $K$ the Koszul complex on sequence of elements that generate a minimal reduction of the
maximal ideal $\fm$ of $R$, and $F$ an $R$-complex that is not exact and with $\hh_i(F)$ finitely generated for each $i\in\bbZ$. Then $F$ is
an Ulrich module if and only if $K \otimes_R F$ is a 
$k$-vector space.
\end{lemma}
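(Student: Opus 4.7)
The plan is to handle the two directions separately. The forward direction is essentially formal, reducing to the corresponding statement for modules recorded just before the lemma. The backward direction is the substantive part; the main tool will be the Koszul long exact sequence together with Nakayama's lemma, peeling off one element of $\bs x$ at a time.

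For the forward direction, if $F$ is an Ulrich module then $\hh_i(F) = 0$ for $i \ne 0$, so $F$ is quasi-isomorphic to $M \coloneqq \hh_0(F)$ placed in degree zero. Since $K$ is a bounded complex of finitely generated free $R$-modules, tensoring with $K$ preserves quasi-isomorphisms, whence $K \otimes_R F \simeq K \otimes_R M$. By the module case, $K \otimes_R M$ is a $k$-vector space because $M$ is Ulrich, so the same is true of $K \otimes_R F$.

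For the backward direction, write $\bs x = x_1, \dots, x_d$, let $K^{(j)}$ denote the Koszul complex on $x_1, \dots, x_j$ (so $K^{(0)} = R$ and $K^{(d)} = K$), and consider the short exact sequence $0 \to R \to K(x_j) \to R[1] \to 0$. Tensoring with $K^{(j-1)} \otimes_R F$ yields a long exact sequence
\[
\cdots \to \hh_n(K^{(j-1)} \otimes F) \xrightarrow{\pm x_j} \hh_n(K^{(j-1)} \otimes F) \to \hh_n(K^{(j)} \otimes F) \to \hh_{n-1}(K^{(j-1)} \otimes F) \xrightarrow{\pm x_j} \cdots.
\]
Induction on $j$ via this sequence shows that each $\hh_n(K^{(j)} \otimes F)$ is finitely generated. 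Next, by descending induction on $j$ starting from $j = d$ (where the hypothesis applies), I claim $\hh_n(K^{(j)} \otimes F) = 0$ for all $n \ne 0$: granting this at level $j$, the exact sequence forces $x_j$ to act surjectively on the finitely generated module $\hh_n(K^{(j-1)} \otimes F)$ for every $n \ne 0$, so Nakayama's lemma (using $x_j \in \fm$) makes it vanish.

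Taking $j = 0$ yields $\hh_n(F) = 0$ for all $n \ne 0$, and since $F$ is not exact, $M \coloneqq \hh_0(F)$ is nonzero. Reinspecting the $n = 0$ segment of the long exact sequence with the vanishing just established shows that $x_j$ is injective on $\hh_0(K^{(j-1)} \otimes F)$ with cokernel $\hh_0(K^{(j)} \otimes F)$; iterating from $j = 1$ to $j = d$ makes $\bs x$ a regular sequence on $M$, so $M$ is maximal Cohen--Macaulay, and identifies $M/(\bs x) M \cong \hh_0(K \otimes_R F)$, which is annihilated by $\fm$ by hypothesis. Hence $\fm M = (\bs x) M$ and $M$ is Ulrich. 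The one subtlety to watch is that $F$ is not assumed bounded, but the argument proceeds degree-by-degree through the long exact sequence and Nakayama is applied separately to each finitely generated homology module, so unboundedness causes no trouble.
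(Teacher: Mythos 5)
Your proof is correct and takes essentially the same route as the paper's: the same mapping-cone long exact sequences for the Koszul complexes $K^{(j)}$, with descending induction on $j$ and Nakayama's lemma to kill the homology of $K^{(j)} \otimes_R F$ in nonzero degrees, concluding at $j=0$ that $F \simeq \hh_0(F)$. The only difference is that at the end you re-derive the module-level statement (showing $\bs x$ is a regular sequence on $\hh_0(F)$ and that $\fm\,\hh_0(F) = (\bs x)\hh_0(F)$), whereas the paper simply invokes the equivalence, recorded just before the lemma, that a nonzero finitely generated module $M$ is Ulrich if and only if $K \otimes_R M$ is a $k$-vector space; your added care about finite generation of the intermediate Koszul homologies and about the unboundedness of $F$ is a welcome, if minor, refinement.
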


\begin{proof} Let $J = (x_1, \dots, x_d)$ be the given minimal reduction.
If $F$ is Ulrich, then 
\[
K \otimes_R F \simeq K \otimes_R \hh_0(F) \simeq \hh_0(F)/J \hh_0(F) = \hh_0(F)/\fm \hh_0(F),
\]
a $k$-vector space. 

Conversely,  assume $K \otimes_R F$ is a $k$-vector space. Let $K(j)$ denote the Koszul complex on $x_1, \dots, x_j$ for $0 \leq j \leq d$. For each $0 < j \leq d$ the standard mapping cone exact sequence
\[
0\longrightarrow K(j-1) \longrightarrow K(j) \longrightarrow \Sigma K(j-1)\longrightarrow 0
\]
gives rise to an exact sequence of $R$-modules
\[
\cdots \to \hh_i(K({j-1}) \otimes_R F) \xrightarrow{\pm x_j} \hh_i(K({j-1}) \otimes_R F) \to
\hh_i(K(j) \otimes_R F) \to \cdots 
\]
Since $K \otimes_R F = K(d) \otimes_F F$ only has homology in degree $0$,  by descending induction on $j$ and Nakayama's Lemma, we deduce that $K(j) \otimes_R F$  has homology only in degree $0$ for all $0 \leq j \leq d$.  The case $j = 0$ gives an isomorphism in the derived category $F \cong \hh_0(F)$ and thus $K \otimes_R \hh_0(F)$ is a $k$-vector space.
As noted above, this is equivalent to $\hh_0(F)$ being an Ulrich module.
\end{proof}

\subsection*{Projective schemes}
Let $k$ be a field.  A \emph{projective $k$-scheme} is a pair $(X, \mcl)$ where $X$ is a $k$-scheme and $\mcl$ is a very ample line bundle on $X$ relative to $k$; that is to say, there is a closed immersion of $k$-schemes  $i\colon X \hookrightarrow \bbp^n_k$ for some $n$ such that  $\mcl \cong i^* \mco_{\bbp^n_k}(1)$. When there is no danger of confusion, we write $\mcl$ as $\mco_X(1)$ or even just $\mco(1)$.  Given a quasi-coherent sheaf $\mcf$ of $\mco_X$-modules, set
\[
\mcf(j)\coloneqq \mcf \otimes_{\mco_X} \mcl^{\otimes j}\,.
\]
By $\bbp^d_k$ we mean the projective $k$-scheme $(\bbp^d_k, \mco_{\bbp^d}(1))$. 

A \emph{linear Noether normalization} of a projective $k$-scheme $(X,\mcl)$ of dimension $d$
is a dominant morphism of $k$-schemes $f\colon X \to \bbp^{d}_k$ such that $\mcl \cong f^* \mco_{\bbp^d_k}(1) $; such a thing exists if $k$ is infinite.

\subsection*{Ulrich sheaves}
Let $(X, \mco_X(1))$ be a projective $k$-scheme and set $d = \dim X$.

An \emph{Ulrich sheaf} on $X$ is a nonzero coherent sheaf $\mcu$  such that, for each $t$ in $[-d,-1]$, the sheaf $\mcu(t)$ has no cohomology, that is to say, $\hh^i(X, \mcu(t)) = 0$ for $i$.

When $d = 0$ every nonzero coherent sheaf is Ulrich and when $d=1$ existence an of Ulrich sheaf is tantamount to existence of a sheaf with no cohomology.

When $d \geq 1$ an Ulrich sheaf $\mcu$ on $X$ also satisfies $\hh^0(X, \mcu(t)) = 0$ for all $t < 0$ and $\hh^{d}(X, \mcu(t)) = 0$ for all $t \geq -d$. This follows from the definition and the Koszul exact sequence
\[
0 \longrightarrow \mcu(t-d-1)  \longrightarrow  \cdots  \longrightarrow  \mcu(t-2)^{\binom {d+1}2}  \longrightarrow  
    \mcu(t-1)^{d+1}  \longrightarrow \mcu(t)  \longrightarrow 0.
\]
See, for instance, \cite[Proposition~2.1]{Eisenbud/Schreyer:2003} for details. 

A coherent sheaf $\mcu$ on $\bbp^d_k$ is Ulrich if and only if it is isomorphic to $\mco_{\bbp^d}^r$ for some $r \geq 1$; see \cite[Proposition~2.1]{Eisenbud/Schreyer:2003}. By the projection formula this generalizes to the following statement. 

\begin{lemma} 
Let $X$ be a projective $k$-scheme and $f\colon X\to\bbp^d_k$ a linear Noether normalization. A coherent sheaf $\mcf$ on $X$ is Ulrich if and only if $f_* \mcf \cong \mco_{\bbp^d}^r$ for some $r \geq 1$. \qed
\end{lemma}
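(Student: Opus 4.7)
The plan is to reduce the statement on $X$ to the known characterization of Ulrich sheaves on $\bbp^d_k$ by using the projection formula together with the fact that a linear Noether normalization is a finite morphism.

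First I would record that $f\colon X \to \bbp^d_k$ is finite. Indeed, $f$ is a proper morphism of projective $k$-schemes, and the assumption that $\mcl \cong f^* \mco_{\bbp^d}(1)$ is very ample on $X$ forces the fibers of $f$ to be $0$-dimensional (otherwise $\mcl$ restricted to a positive-dimensional fiber would fail to be ample), so $f$ is quasi-finite and proper, hence finite. Consequently $f$ is affine, so for every coherent sheaf $\mcg$ on $X$ and every $i \geq 0$ one has $\hh^i(X, \mcg) = \hh^i(\bbp^d_k, f_* \mcg)$, and moreover $f_* \mcg = 0$ if and only if $\mcg = 0$.

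Next, by the projection formula and the identity $\mco_X(t) \cong f^* \mco_{\bbp^d}(t)$, for every $t \in \bbZ$ we obtain
\[
f_*\bigl(\mcf(t)\bigr) \;=\; f_*\bigl(\mcf \otimes_{\mco_X} f^* \mco_{\bbp^d}(t)\bigr) \;\cong\; (f_* \mcf)(t).
\]
Combining this with the previous paragraph yields $\hh^i(X, \mcf(t)) \cong \hh^i(\bbp^d_k, (f_* \mcf)(t))$ for all $i$ and all $t$. Therefore $\mcf$ satisfies the vanishing conditions defining an Ulrich sheaf on $X$ precisely when $f_* \mcf$ satisfies the corresponding vanishing conditions on $\bbp^d_k$, and $\mcf$ is nonzero precisely when $f_* \mcf$ is nonzero.

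Thus $\mcf$ is an Ulrich sheaf on $X$ if and only if $f_* \mcf$ is an Ulrich sheaf on $\bbp^d_k$. By the cited result of Eisenbud--Schreyer \cite[Proposition~2.1]{Eisenbud/Schreyer:2003}, the latter condition is equivalent to $f_* \mcf \cong \mco_{\bbp^d}^r$ for some $r \geq 1$, which completes the proof. The only nontrivial point is the finiteness of $f$; once that is in hand, everything else is a formal consequence of the projection formula and the structure theorem for Ulrich sheaves on projective space.
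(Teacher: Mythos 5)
Your proof is correct and follows exactly the route the paper intends: the lemma is stated there as an immediate consequence of the projection formula applied to the (finite) linear Noether normalization, reducing to the Eisenbud--Schreyer classification of Ulrich sheaves on $\bbp^d_k$. Your added justification that $f$ is finite (proper with zero-dimensional fibers, since $f^*\mco(1)$ is ample) fills in the one detail the paper leaves implicit.
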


As in the affine case, we say that a complex $\mcg$ of quasi-coherent sheaves over $X$ is \emph{Ulrich} if $\hh^i(\mcg)=0$ for $i\ne 0$ and the sheaf $\hh^0(\mcg)$ is a Ulrich.

\subsection*{Blowups}
Let $(R, \fm, k)$ be a local ring. Let $B$ be the blow-up of $\spec R$ at $\{\fm\}$, let $p\colon B \to \spec R$ be the
structure map, and $E$ the fiber of $p$ over $\{\fm\}$. Thus $B = \proj \rees_\fm(R)$ and $E = \proj \agr_\fm(R)$,
where
\[
\rees_{\fm}(R) \coloneqq \bigoplus_{n\geqslant 0} \fm^n \quad\text{and}\quad
\agr_\fm(R)\coloneqq R/\fm \otimes_R \rees_\fm(R) = \bigoplus_{n \geqslant 0} \fm^n/\fm^{n+1},
\]
and there is a cartesian square
\begin{equation}
    \label{eq:cartesian}
\begin{tikzcd}
E \arrow[d,"q" swap] \arrow[r, hookrightarrow,"j"] & B \arrow[d,"p"] \\
\spec k \arrow[r, hookrightarrow] &\spec R
\end{tikzcd}
\end{equation}
where the maps are the obvious ones. The ideal $\mci_E$ cutting out $E$ as a subscheme of $B$ is isomorphic to $\mco_B(1)$ via a map we write as $\sigma \colon \mco_B(1) \xrightarrow{\cong} \mci_E$; that is to say, there is an exact sequence 
\begin{equation}
\label{eq:blowup-ses}
0 \longrightarrow \mco_B(1) \xrightarrow{\ \sigma\ } \mco_B  \longrightarrow j_* \mco_E  \longrightarrow 0\,.    
\end{equation}

\subsection*{Extensions}
With the notation as above,  given a coherent sheaf $\mcu$ on $E$, we  say a coherent sheaf $\mcf$ on $B$  is an \emph{extension} of $\mcu$, or that  $\mcu$ \emph{extends to} $\mcf$, if
\[
\mcu \cong j^* \mcf \quad\text{and}\quad \mbf{L}^i j^* \mcf = 0 \text{ for all $i  >0$}\,;
\] 
equivalently, there is a quasi-isomorphism $\mcu \simeq \mbf{L} j^* \mcf$ of complexes of coherent sheaves on $E$. The exact sequence \eqref{eq:blowup-ses} gives that $ \mbf{L}^i j^* \mcf = 0$ for all $i \notin \{0,1\}$ and $j_* \mbf{L}^1 j^*\mcf \cong \ker(\mcf(1) \xrightarrow{\sigma} \mcf)$. So, $\mcf$  is an extension of $\mcu$ if and only if $j^* \mcf \cong \mcu$ and
$\mcf(1) \xrightarrow{\sigma} \mcf$ is injective.

The following criterion can be used in some situations to construct extensions:

\begin{proposition}
    \label{pr:lift-obstruction}
Assume $R$ is  complete. Let $\mce$ be a vector bundle  on $E$ and $\mcend_{\mco_E}(\mce)$ its endomorphism sheaf. If $\hh^2(E, \mcend_{\mco_E}(\mce)(i)) = 0$ for all $i \geq 1$, then there is a vector bundle $\mcf$ on $B$ such that $\mce \simeq  \mbf{L} j^*(\mcf)$.
\end{proposition}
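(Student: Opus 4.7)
The plan is to construct $\mcf$ as the algebraization, via Grothendieck's existence theorem, of a formal vector bundle on the formal completion $\widehat B$ of $B$ along $E$; the formal object itself will be built inductively by lifting $\mce$ across the infinitesimal thickenings $E_n \coloneqq V(\mci_E^{n+1})$, starting from $E_0 = E$.

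The heart of the argument is obstruction theory. Suppose that at stage $n$ one has a vector bundle $\mce_n$ on $E_n$ restricting to $\mce$ on $E$. The closed immersion $E_n \hookrightarrow E_{n+1}$ is a square-zero thickening with ideal sheaf $\mci_E^{n+1}/\mci_E^{n+2}$, so the obstruction to producing a vector-bundle lift $\mce_{n+1}$ lies in $\operatorname{Ext}^2_{\mco_E}(\mce, \mce \otimes_{\mco_E} \mci_E^{n+1}/\mci_E^{n+2})$. Using $\sigma$ from \eqref{eq:blowup-ses} one identifies $\mci_E^{n+1}/\mci_E^{n+2} \cong \mco_E(n+1)$, and since $\mce$ is locally free this Ext group reduces to $\hh^2(E, \mcend_{\mco_E}(\mce)(n+1))$, which vanishes by assumption. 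The induction therefore goes through and yields a compatible system $\{\mce_n\}_{n \geq 0}$, i.e., a formally locally free coherent sheaf on $\widehat B$.

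Since $R$ is complete and $p\colon B \to \spec R$ is projective, Grothendieck's existence theorem algebraizes this formal sheaf to a coherent sheaf $\mcf$ on $B$. Local freeness of $\mcf$ at points of $E$ follows from that of its formal completion by faithfully flat descent. The non-locally-free locus of $\mcf$ is then a closed subset of $B$ contained in $B \setminus E$; if it were nonempty, its image under the proper map $p$ would be a nonempty closed subset of the local scheme $\spec R$, hence would contain $\{\fm\}$, forcing the locus to meet $E$ -- a contradiction. Hence $\mcf$ is a vector bundle, and being locally free gives $\mbf{L} j^* \mcf \cong j^* \mcf \cong \mce$ by construction.

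The main substantive step is the obstruction analysis: the crucial point is identifying the successive ideal quotients $\mci_E^{n+1}/\mci_E^{n+2}$ with the twists $\mco_E(n+1)$, so that the hypothesis -- indexed over $i \geq 1$ -- applies uniformly across all thickenings. The subsequent algebraization and the check that $\mcf$ is a vector bundle are relatively routine once completeness of $R$ and properness of $p$ are in hand.
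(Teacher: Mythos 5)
Your proposal is correct and follows essentially the same route as the paper: inductively lift $\mce$ across the square-zero thickenings $E_n \hookrightarrow E_{n+1}$ using the obstruction in $\hh^2(E,\mcend_{\mco_E}(\mce)\otimes \mci_E^{n+1}/\mci_E^{n+2}) \cong \hh^2(E,\mcend_{\mco_E}(\mce)(n+1))$, algebraize the resulting formal sheaf via Grothendieck's existence theorem, and deduce local freeness from the formal fibers together with properness of $p$ over the local base. The only cosmetic difference is that the paper cites SGA1, Exp.~III for the obstruction-theoretic step, which you invoke as standard deformation theory.
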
 

\begin{proof}
Recall $E$ is the fiber of $p\colon B \to \spec R$ over $\spec(R/\fm)$. Write $E_j$ for the fiber of $p$ over $\spec(R/\fm^{j+1})$, so that  $E = E_0$  and each $E_j$ is an infinitesimal thickening of $E$. By \cite[III.7.1]{SGA1:2003},  for each $j \geq 0$, given a vector bundle $\mce_j$ on $E_j$ that is an extension of $\mce$, the obstruction to extending it to a vector bundle on $E_{j+1}$ is an element of $\hh^2(E, \mcend_{\mco_E}(\mce) \otimes_{\mco_E} \mci_E^{j+1}/\mci_E^{j+2})$. Since $\mci_E \cong \mco_B(1)$,  we have $\mci_E^{j+1}/\mci_E^{j+2} \cong \mco_E(j+1)$. So, under our assumptions, these obstructions all vanish, and we may construct a sequence $\mce = \mce_0, \mce_1, \mce_2, \dots$ of vector bundles on $E = E_0, E_1, E_2, \dots$ such that the restriction of $\mce_j$ to $E_{j-1}$ is isomorphic to $\mce_{j-1}$ for all $j \geq 1$. By \cite[10.11.1]{EGA1:1961} such a sequence determines a coherent sheaf on the formal scheme $\hat{B}$ given by completing $B$ along $E$.

Since $R$ is complete, Grothendieck's Existence Theorem~\cite[5.1.6]{EGA3:1961}  gives that the functor $\mcf \mapsto \hat{\mcf}$ induces an equivalence of categories between coherent sheaves on $B$ and those on the formal scheme $\hat{B}$. This proves that there exists a coherent sheaf $\mcf$ on $B$ whose restriction to $E_j$ is isomorphic to $\mce_j$ for all $j \geq 0$, via isomorphisms compatible with those in the sequence above; see also \cite[5.1.7]{EGA3:1961}.

Finally, we show $\mcf$ is locally free:\footnote{We thank Luc Illusie for pointing out this proof to us.}  Let $y \in B$ be any point lying in the closed fiber of $p$. Then $N \coloneqq \mcf_y$ is a finitely generated module over the $R$-algebra $A = \mco_{B,y}$ and the completion of $N$ at $\fm \cdot A \subseteq \fm_A$ is a free $\hat{A}_{\fm A}$-module. It follows that $N$ is a free $A$-module. Thus,  the set of points of $B$ at which $\mcf$  is not locally free forms a closed subset that does not meet the closed fiber. Since $p$ is proper, this set must be
empty. 
\end{proof}

\begin{corollary}
    \label{cor:no-obstruction}
    When $R$ is a  complete local ring of dimension two, any vector bundle  on $E$  extends to a vector bundle on $B$. \qed
\end{corollary}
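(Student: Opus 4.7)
The plan is to apply Proposition~\ref{pr:lift-obstruction} directly, and the whole content will be a dimension count showing that the obstruction groups vanish trivially when $\dim R = 2$.

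Observe that $E = \proj \agr_{\fm}(R)$, and since $\dim \agr_{\fm}(R) = \dim R = 2$, the projective scheme $E$ has dimension at most one. Hence, by Grothendieck's vanishing theorem, $\hh^i(E, \mcg) = 0$ for every quasi-coherent sheaf $\mcg$ on $E$ and every $i \geq 2$. In particular, for any vector bundle $\mce$ on $E$,
\[
\hh^2\bigl(E, \mcend_{\mco_E}(\mce)(i)\bigr) = 0 \quad \text{for all } i \geq 1,
\]
so the hypothesis of Proposition~\ref{pr:lift-obstruction} is satisfied.

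Applying that proposition, we obtain a vector bundle $\mcf$ on $B$ together with a quasi-isomorphism $\mce \simeq \mbf{L} j^* \mcf$; by the definition of extension recalled earlier in this section, this exhibits $\mcf$ as an extension of $\mce$. Since the proposition already yields that $\mcf$ is locally free, this completes the proof.

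There is essentially no obstacle here beyond noticing that the obstruction lives in $\hh^2$ of a one-dimensional scheme and therefore vanishes for free; all the real work has already been done in Proposition~\ref{pr:lift-obstruction}.
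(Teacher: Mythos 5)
Your proof is correct and is exactly the argument the paper intends (the corollary is stated with an immediate \qed): since $\dim E \leq \dim R - 1 = 1$, Grothendieck vanishing kills $\hh^2(E,-)$ for all quasi-coherent sheaves, so the obstruction hypothesis of Proposition~\ref{pr:lift-obstruction} holds vacuously.
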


\section{Ulrich modules from Ulrich sheaves}
\label{se:UM-from-US}

We keep the notation from the previous section: $(R,\fm,k)$ is a local ring and $p\colon B \to \spec R$ is the blowup of $\spec R$ at $\{\fm\}$, with exceptional fiber $E$; as before $j\colon E\to B$ is the canonical inclusion. Given a finitely generated $R$-module $M$, its {\em strict transform} is the coherent sheaf on $B$ associated to  the graded $\rees_\fm(R)$-module $\rees_\fm(M) = \bigoplus_i \fm^i M$.  The result below is a more precise version of Theorem~\ref{ithm:existence}. Part (1) is \cite[Lemma~2.2]{Iyengar/Ma/Walker/Zhuang:2024}, and included here for ease of reference.

\begin{theorem}  
\label{thm:lift} Let $R$ be a local ring with infinite residue field $k$. The following statements hold.
\begin{enumerate}[\quad\rm(1)]
    \item 
If $M$ is an Ulrich $R$-module, then the coherent sheaf  on $E$ associated to the graded module $\agr_\fm M$ is an Ulrich sheaf on $E$ that extends to the strict transform of $M$ on $B$. 
    \item 
If $\mcu$ is an Ulrich sheaf on $E$ and $\mcf$ is an extension of $\mcu$ to $B$,
then $\mbf{R}p_* \mcf$  is an Ulrich $R$-module.    
\end{enumerate}
\end{theorem}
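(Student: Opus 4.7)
The plan is to focus on part~(2), since~(1) is cited from prior work. The strategy is to invoke Lemma~\ref{le:Ucomplex} with $F \coloneqq \mathbf{R}p_*\mcf$, so that the problem reduces to showing $K \otimes_R \mathbf{R}p_*\mcf$ is a $k$-vector space complex. The projection formula identifies this with $\mathbf{R}p_*(K(\mathbf{x};\mcf))$, where $K(\mathbf{x};\mcf) = p^*K \otimes_{\mco_B}\mcf$.

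The key construction is an exact Koszul-type resolution of $\mcf$ on $B$. A chart-by-chart calculation on the Rees algebra shows that $(\mathbf{x})\mco_B = \mci_E$, using that $\mathbf{x}$ is a reduction of $\fm$. Hence the $x_i \in \fm$, viewed as sections of $\mco_B(1) \cong \mci_E$, generate $\mco_B(1)$ globally and have no common zero on $B$. The associated Koszul complex $0 \to \mco_B(-d) \to \cdots \to \mco_B(-1)^d \to \mco_B \to 0$ is therefore exact, and tensoring with $\mcf$ preserves exactness since the Koszul terms are line bundles, yielding an exact sequence
\[
0 \longrightarrow \mcf(-d) \longrightarrow \cdots \longrightarrow \mcf(-1)^d \longrightarrow \mcf \longrightarrow 0
\]
of coherent sheaves on $B$.

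Applying $\mathbf{R}p_*$ gives a hypercohomology spectral sequence $E_1^{p,q} = R^q p_*\mcf(p)^{\binom{d}{-p}} \Rightarrow 0$. To compute the $E_1$-page I would use the short exact sequences $0 \to \mcf(n+1) \to \mcf(n) \to j_*\mcu(n) \to 0$ provided by the extension hypothesis, together with the Ulrich vanishings $H^i(E,\mcu(n))=0$ for $n \in [-(d-1),-1]$ and their standard consequences obtained from the Koszul sequence on $E$: $H^0(\mcu(n))=0$ for $n<0$, $H^{d-1}(\mcu(n))=0$ for $n\geq -(d-1)$, and (iterating) $H^i(\mcu(n))=0$ for $i\geq 1$ and $n\geq 0$. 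A descending induction then yields $p_*\mcf(n)=M$ throughout $n\in[-d,0]$ and $R^ip_*\mcf(n)=0$ for $i>0$ and $n\in[-(d-1),0]$, while at the boundary $n=-d$ the sole surviving cohomology is $R^{d-1}p_*\mcf(-d)\cong H^{d-1}(E,\mcu(-d))\cong H^0(E,\mcu)$. So the $E_1$-page has row $q=0$ equal to the Koszul complex $K(\mathbf{x};M)$ and a single other entry, $H^0(E,\mcu)$ at $(p,q)=(-d,d-1)$.

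For degree reasons the differentials $d_r$ vanish for $2\leq r\leq d-1$, so the only nontrivial higher differential is $d_d\colon H^0(E,\mcu)\to M/\mathbf{x}M$. Convergence of the spectral sequence to zero forces $d_d$ to be an isomorphism and $H_i(\mathbf{x};M)=0$ for all $i\geq 1$. The latter says $\mathbf{x}$ is an $M$-regular sequence, so $M$ is maximal Cohen-Macaulay; the former identifies $M/\mathbf{x}M$ with the $k$-vector space $H^0(E,\mcu)$, forcing $\fm M\subseteq \mathbf{x}M$ and hence $\mathbf{x}M=\fm M$. By the characterization of Ulrich modules recalled in Section~\ref{se:Ulrich-things}, $M$ is Ulrich. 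The main technical obstacle will be the $E_1$-page bookkeeping, especially identifying the boundary term $H^{d-1}(\mcu(-d))\cong H^0(\mcu)$ via the hypercohomology spectral sequence on the Koszul sequence on $E$ (under the Ulrich vanishings), and confirming it lands in exactly the slot where $d_d$ identifies $M/\mathbf{x}M$ as a $k$-vector space.
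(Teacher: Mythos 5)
Your argument for part (2) is correct, but it follows a genuinely different route from the paper's. The paper works with the \emph{untwisted} complex $\mck=p^*K$, whose homology sheaves are supported on $E$; it computes $\mch_n(\mcf\otimes\mck)\cong j_*(\mcu\otimes j^*\mch_n(\mck))$ via the Tor spectral sequence (using $\mbf{L}j^*\mcf\simeq\mcu$), and then kills the terms with $n\ge 1$ under $\mbf{R}p_*$ by a thick-subcategory argument in $\mathrm{Perf}(E)$ (Lemma~\ref{le:Hi}), so that $\mbf{R}p_*(\mcf\otimes\mck)\simeq\iota_*q_*\mcu$ and Lemma~\ref{le:Ucomplex} finishes. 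You instead take the \emph{twisted}, exact Koszul complex on the surjection $\mco_B(-1)^d\twoheadrightarrow\mco_B$ (this exact complex does appear in the paper, inside the proof of Lemma~\ref{lem:Koszul-calculation}(3), but is used there only to resolve the homology sheaves of $\mck$), tensor with $\mcf$, and run the hypercohomology spectral sequence converging to zero, feeding the Ulrich vanishings in through the extension sequences $0\to\mcf(n+1)\to\mcf(n)\to j_*\mcu(n)\to 0$. Your $E_1$-page claims check out: the descending induction from Serre vanishing does need the extended vanishings $\hh^i(E,\mcu(n))=0$ for $i\ge 1$, $n\ge 0$ (cleanest via $f_*\mcu\cong\mco_{\bbp^{d-1}}^r$), and the identification of the $q=0$ row with $K(\bs x;p_*\mcf)$ is a genuine but routine local computation with $\sigma$. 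Two small remarks: your opening reduction via Lemma~\ref{le:Ucomplex} and the projection formula is vestigial --- the spectral sequence directly yields $R^ip_*\mcf=0$ for $i>0$, the regularity of $\bs x$ on $M=p_*\mcf$, and $\fm M=\bs xM$, so you never actually pass through $K\otimes_R\mbf{R}p_*\mcf$; and you should note explicitly that $\hh^{d-1}(E,\mcu(-d))\ne 0$ (e.g.\ it is $k^r$ with $r=\rank f_*\mcu\ge1$) to guarantee $M\ne 0$. The trade-off: your route is more classical and avoids derived-category machinery, giving the MCM property and the Ulrich equality by hand; the paper's route requires less cohomological bookkeeping and produces the cleaner identity $\mbf{R}p_*(\mcf\otimes\mck)\simeq\iota_*q_*\mcu$, which underlies the remark following the proof.
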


It may help to illustrate the result above in an example.

\begin{example}
 Suppose $\dim E = 0$, so that every nonzero coherent sheaf on $E$ is Ulrich. Then $B = \spec A$ is an affine scheme and $\fm A = (s)$ where $s$ is not a zerodivisor.  Part (2) of the theorem asserts that if $M$ is finitely generated and $s$-torsion free $A$-module, then it is Ulrich as an $R$-module.  
 
 Indeed, since $k$ is infinite there exists an element $x \in \fm$ with $x \fm^j = \fm^{j+1}$ for $j \gg 0$. Hence $x (\fm A)^j = (\fm A)^{j+1}$, that is to say, $(xs^j) = (s^{j+1})$ for $j \gg 0$.  Since $s$ is not a zerodivisor it follows that $x = us$ for some unit $u$ of $A$.  In particular, if $M$ is $s$-torsion free, then 
 it is $x$-torsion free and hence maximal Cohen-Macaulay as an $R$-module. Moreover
\[
M/xM = M/sM = M/(\fm A) M = M /\fm M.
\]
Thus $M$ is an Ulrich module. 
\end{example}

The calculation below extracts a key step in the proof of Theorem~\ref{thm:lift}.

\subsection*{A Koszul homology calculation}
Let $B$ be a scheme, $\mci$ a sheaf of ideals that is locally generated by a non-zero-divisor, and $E$ the subscheme of $B$ cut out by $\mci$. Let $\mcf \twoheadrightarrow \mci$ be a surjection with $\mcf$ locally free of rank $d$, and $g\colon \mcf \to \mco_B$ its composition with the inclusion $\mci \hookrightarrow \mco_B$. Let
  \[
  \mck \coloneqq 0 \longrightarrow \Lambda^d \mcf \longrightarrow \cdots 
        \longrightarrow \Lambda^2 \mcf \longrightarrow \mcf \longrightarrow \mco_B \longrightarrow 0
  \]
be the Koszul complex on $g$.

\begin{lemma}
\label{lem:Koszul-calculation}
With the setup as above, let $j\colon E\hookrightarrow B$ the canonical inclusion of schemes. For each integer  $0 \leq n \leq d$ the following statements hold:
\begin{enumerate}[\quad\rm(1)]
    \item 
    There is an isomorphism $\mch_n(\mck) \cong j_* j^* \mch_n(\mck)$.
    \item 
    The sheaf $j^* \mch_n(\mck)$ is locally free of rank $\binom{d-1}{n}$; in particular, $\mch_d(\mck)=0$.
    \item
    There is an exact sequence of locally free $E$-sheaves
 \[
0 \longrightarrow   \mce_{d-n}  \longrightarrow \cdots \longrightarrow \mce_2  
                \longrightarrow \mce_1  \longrightarrow j^* \mch_n(\mck) \longrightarrow 0
\]
with $\mce_i\coloneqq \Lambda^{n+i}(j^*\mcf) \otimes \mathcal{N}^{i}$, where $\mathcal{N}\coloneqq (j^*\mci)^{-1}$ is the normal bundle of $j$.
\end{enumerate}  
\end{lemma}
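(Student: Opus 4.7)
The plan is to reduce (1) and (2) to a local Koszul computation, and to derive (3) from a residual short exact sequence on $E$.

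Set $\mcg\coloneqq\ker(\mcf\twoheadrightarrow\mci)$. Since $\mci$ is a line bundle, $\mcg$ is locally free of rank $d-1$ and the sequence $0\to\mcg\to\mcf\to\mci\to 0$ is locally split. On an affine open $U$ over which $\mci|_U=(t)$ for a non-zero-divisor $t$, a splitting gives $\mcf|_U=\mco_U\cdot e_1\oplus\mcg|_U$ with $g(e_1)=t$ and $g|_{\mcg|_U}=0$, so $\mck|_U$ becomes the Koszul complex $K(t,0,\ldots,0)\cong K(t)\otimes_{\mco_U}\Lambda^\bullet(\mcg|_U)$ with zero differential on the second factor. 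Since $t$ is a non-zero-divisor, $K(t)$ has homology $\mco_U/t$ in degree $0$ only, whence
\[
\mch_n(\mck)|_U\cong\Lambda^n(\mcg|_U)/t\,\Lambda^n(\mcg|_U).
\]
This sheaf is killed by $\mci$, which gives (1), and its pullback to $E$ is $\Lambda^n\oG$ where $\oG\coloneqq j^*\mcg$ is a rank $d-1$ bundle on $E$, which gives (2); in particular $\Lambda^d\oG=0$ forces $\mch_d(\mck)=0$. To make this identification canonical rather than merely local, I would note that the inclusion $\Lambda^n\mcg\hookrightarrow\Lambda^n\mcf=\mck_n$ lands in cycles of $\mck$ and, by the local calculation, induces a surjection onto $\mch_n(\mck)$ with kernel exactly $\mci\cdot\Lambda^n\mcg$.

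For (3), pull back $0\to\mcg\to\mcf\to\mci\to 0$ to $E$. This remains exact since $\mci$ is locally free, yielding
\[
0\longrightarrow\oG\longrightarrow j^*\mcf\longrightarrow\mathcal{N}^{-1}\longrightarrow 0
\]
under $j^*\mci\cong\mathcal{N}^{-1}$. For each $k\geq 1$, since $\mathcal{N}^{-1}$ is a line bundle, taking exterior powers produces the canonical short exact sequence
\[
0\longrightarrow\Lambda^k\oG\longrightarrow\Lambda^k(j^*\mcf)\longrightarrow\Lambda^{k-1}\oG\otimes\mathcal{N}^{-1}\longrightarrow 0.
\]
Tensoring the sequence for $k=n+i$ with $\mathcal{N}^i$ gives
\[
0\longrightarrow\Lambda^{n+i}\oG\otimes\mathcal{N}^i\longrightarrow\mce_i\longrightarrow\Lambda^{n+i-1}\oG\otimes\mathcal{N}^{i-1}\longrightarrow 0.
\]
The right-hand term for index $i$ coincides with the left-hand term for index $i-1$ (and $i=1$ recovers $\Lambda^n\oG\cong j^*\mch_n(\mck)$), so splicing these sequences for $i=1,\ldots,d-n$ produces the desired resolution. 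It terminates at $i=d-n$ because $\Lambda^d\oG=0$.

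The two steps I expect to require the most care are the local-to-global identification of $\mch_n(\mck)$ in (1), and tracking the twists through the splicing in (3) so that the terms appear exactly in the stated form $\mce_i=\Lambda^{n+i}(j^*\mcf)\otimes\mathcal{N}^i$; once these are handled, the remainder of the argument is formal.
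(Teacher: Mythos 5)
Your proof is correct and follows essentially the same route as the paper: both identify $j^*\mch_n(\mck)$ with $\Lambda^n(j^*\mcg)$ for $\mcg=\ker g$ and obtain (3) from the exterior-power short exact sequences attached to the line-bundle quotient $\mcf\twoheadrightarrow\mci$. The only differences are cosmetic: the paper proves (1) by noting that $\mck$ is a sheaf of DG algebras, so its homology is a module over $\mch_0(\mck)=\mco_B/\mci$; it works with the canonical sequences $0\to\Lambda^n\mcg\to\Lambda^n\mcf\to\Lambda^{n-1}\mcg\otimes\mci\to0$ globally on $B$ rather than via a local splitting and K\"unneth; and it assembles the resolution in (3) on $B$ before applying $j^*$, rather than pulling back to $E$ first and splicing there.
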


\begin{proof}
Since $\mck$ is a sheaf of differential graded algebras, the action of $\mco_B$ on $\mch_n(\mck)$ factors though the canonical map $\mco_B \to \mch_0(\mck)$ and hence we have canonical isomorphisms $\mch_n(\mck) \cong j_*j^*\mch_n(\mck)$, which justifies (1).

(2)  Let $\mcg$ be the kernel of $g\colon \mcf\to \mco_B$; it is locally free of rank $d-1$.  Since $\mcf/\mcg \cong
  \mci$ is locally free of rank one, for $n\ge 1$ there are canonical exact sequences
  \begin{equation} 
  \label{canex}
  0 \longrightarrow \Lambda^n \mcg \longrightarrow \Lambda^n \mcf \xrightarrow{\ p\ } \Lambda^{n-1} \mcg \otimes \mci \longrightarrow 0,
\end{equation}
and the differential in the Koszul complex $\mck$ is the composition of $p$ with the inclusion
\[
\Lambda^{n-1} \mcg \otimes \mci   \hookrightarrow \Lambda^{n-1} \mcf
\]
induced by the inclusions of $\mci$ into $\mco_B$ and $\mcg$ into $\mcf$.  We may thus identify $\mch_n(\mck)$ with the cokernel of the inclusion
\[
\Lambda^{n} \mcg \otimes \mci   \hookrightarrow \Lambda^{n} \mcg \otimes \mco_B \cong \Lambda^n \mcg
\]
and hence deduce that there are isomorphisms 
\[
  j^*\mch_n(\mck) \cong j^* \Lambda^n \mcg \cong \Lambda^n(j^* \mcg)
\]
for $0 \leq n \leq d$. This justifies (2). 

(3) The Koszul complex associated to the surjection  $\mcf \twoheadrightarrow \mci$ has the form
  \[
  0 \to 
\Lambda^d \mcf \otimes \mci^{1-d} \to  \cdots
\to \Lambda^3 \mcf \otimes \mci^{-2} 
  \to \Lambda^2 \mcf \otimes \mci^{-1} \to \mcf \to \mci \to 0\,,
  \]
which is an exact sequence of locally free sheaves. It follows from the exact sequences \eqref{canex} that for each $0\le n \le d$,  the $n$th syzygy in the exact sequence above is $\Lambda^n\mcg \otimes \mci^{-n+1}$, so we get an exact sequence
  \[
  0 \to
\Lambda^{d} \mcf \otimes \mci^{n-d} \to \cdots \to  \Lambda^{n+2} \mcf \otimes \mci^{-2}  \to \Lambda^{n+1} \mcf \otimes \mci^{-1}  \to \Lambda^n \mcg \to 0 
\]
of locally free $B$-sheaves. Applying $j^*$ yields (3).
\end{proof}

\subsection*{A Koszul complex associated to the blowup}
We prepare to prove Theorem~\ref{thm:lift}. Since $k$ is infinite, $\fm$ has a minimal reduction: elements $\bs x\coloneqq x_0, \dots, x_d$ of $R$ such that the ideal $J=(\bs x)$ satisfies the equality $J \fm^i = \fm^{i+1}$ for all $i \gg 0$. In particular, in the notation of \eqref{eq:cartesian}, the fiber of $p$ over $\spec(R/J)$ is isomorphic to the exceptional fiber of $p$:
\[
\proj(\bigoplus_{n\geqslant 0} \fm^n/J \fm^{n}) \cong E\,.
\]
The means that the images of $\bs x$ under the map $R  \to \Gamma(B, \mco_B)$ (which we also denote $\bs x$) generate the ideal $\mci_E$; that is, we have a surjection $(\bs x)\colon \mco_B^d \twoheadrightarrow \mci_E$ of coherent sheaves.  Let $K$ be the Koszul complex on $\bs x$ and set 
\[
\mck = p^*K = K_{\mco_B}(\bs x)\,;
\]
this is a complex of coherent sheaves on $B$. In the statement below
$\mathrm{Perf}(E)$ is the derived category of perfect complexes over
$E$ and $\thick(S)$ refers to the smallest triangulated subcategory of a
triangulated category that is closed under direct summands and that
contains the objects in $S$. 

\begin{lemma}
\label{le:Hi} 
With the setup as above, for each integer $1 \le n \le d-1$, one has an isomorphism $\mch_n(\mck) \cong j_* j^* \mch_n(\mck)$, the sheaf $j^*\mch_n(\mck)$ is locally free, and as subcategories of $\mathrm{Perf}(E)$ there is an equality
\[
  \thick(j^*\mch_n(\mck),\dots, j^*\mch_{d-1}(\mck))
  =
  \thick(\mco_E(-1),\dots,\mco_E(n-d))\,.
\]
In particular, an object $\mcu$ in $\dcoh{E}$ is an Ulrich sheaf if and only if
\[
  \mbf R q_*(\mcu \otimes j^*\mch_n(\mck)) =0 \quad\text{for all $1\le n \le d-1$.}
\]
\end{lemma}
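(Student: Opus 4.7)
The plan is to derive everything from Lemma~\ref{lem:Koszul-calculation} applied to the surjection $\bs x\colon \mco_B^d \twoheadrightarrow \mci_E$. Parts~(1) and (2) of that lemma directly give the first two assertions, namely the isomorphism $\mch_n(\mck) \cong j_* j^* \mch_n(\mck)$ and the local freeness of $j^*\mch_n(\mck)$. The essential additional input is part~(3): since $\mcf = \mco_B^d$ is trivial and the normal bundle $\mathcal{N} = (j^*\mci_E)^{-1}$ equals $\mco_E(-1)$, one obtains for each $0 \le n \le d-1$ an explicit locally free resolution
\[
0 \longrightarrow \mco_E(-(d-n)) \longrightarrow \mco_E(-(d-n-1))^{\binom{d}{d-1}} \longrightarrow \cdots \longrightarrow \mco_E(-1)^{\binom{d}{n+1}} \longrightarrow j^*\mch_n(\mck) \longrightarrow 0.
\]
Notice that at the top of the range, $n = d-1$, this collapses to an isomorphism $j^*\mch_{d-1}(\mck) \cong \mco_E(-1)$.

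I would prove the thick subcategory equality by descending induction on $n$, with base case $n = d-1$ settled by the isomorphism just observed. In the inductive step, the resolution immediately yields $j^*\mch_n(\mck) \in \thick(\mco_E(-1), \ldots, \mco_E(-(d-n)))$, which combined with the inductive hypothesis gives the inclusion $\subseteq$. The reverse inclusion is the delicate step: the leftmost term $\mco_E(-(d-n))$ appears with multiplicity exactly one (the binomial coefficient $\binom{d}{d}$), so by iteratively forming cones along the syzygies of the resolution one obtains
\[
\mco_E(-(d-n)) \in \thick(\mco_E(-1), \ldots, \mco_E(-(d-n-1)), j^*\mch_n(\mck)).
\]
Invoking the inductive hypothesis to place $\mco_E(-1), \ldots, \mco_E(-(d-n-1))$ inside $\thick(j^*\mch_{n+1}(\mck), \ldots, j^*\mch_{d-1}(\mck))$ completes the containment.

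For the equivalence at the end, the observation is that the full subcategory of $\mcv \in \mathrm{Perf}(E)$ satisfying $\mbf R q_*(\mcu \otimes \mcv) = 0$ is closed under cones and direct summands---both $-\otimes \mcu$ (derived) and $\mbf R q_*$ are triangulated---hence is a thick subcategory of $\mathrm{Perf}(E)$. By definition, an Ulrich sheaf $\mcu$ on the $(d-1)$-dimensional projective scheme $E$ is characterized by the vanishings $\mbf R q_*(\mcu(-i)) = 0$ for $i \in [1, d-1]$, and the thick subcategory equality at $n = 1$ converts this directly into the asserted condition $\mbf R q_*(\mcu \otimes j^*\mch_n(\mck)) = 0$ for all $1 \le n \le d-1$. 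The main obstacle throughout is the reverse inclusion in the inductive step above; what makes it tractable is precisely the multiplicity-one leading term in the resolutions coming from Lemma~\ref{lem:Koszul-calculation}(3).
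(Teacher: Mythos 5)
Your proposal is correct and follows essentially the same route as the paper: both apply Lemma~\ref{lem:Koszul-calculation} with $\mcf=\mco_B^d$ to obtain the resolution of $j^*\mch_n(\mck)$ by twists of $\mco_E$, deduce the thick-subcategory equality (which the paper leaves as ``follows'' and you spell out via descending induction on $n$), and conclude using that $\{\mcv \in \mathrm{Perf}(E) \mid \mbf{R}q_*(\mcu\otimes^{\mbf L}\mcv)=0\}$ is thick. One minor remark: the multiplicity-one leading term is not actually needed for the reverse inclusion, since thick subcategories are closed under direct summands.
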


\begin{proof}
 As noted above, $\mci_E$ is locally generated by a non-zero-divisor and there is a surjection $\mco_B^d \twoheadrightarrow \mci_E$. So applying  Lemma~\ref{lem:Koszul-calculation} with $\mcf = \mco_B^d$, for each $1 \leq n \leq d-1$ we obtain isomorphisms $j_* j^* \mch_n(\mck) \cong \mch_n(\mck)$, which justifies the first part of the statement. Moreover, there is an exact sequence
  \[
  0 \to
\mco_E(n-d) \to \mco_E(n-d+1)^d\to \cdots \to \mco_E(-1)^{\binom d{n+1}} \to j^* \mch_n(\mck) \to 0,.
\]
The asserted equality regarding thick subcategories of  $\mathrm{Perf}(E)$ follows.

The final assertion holds since, for any $\mcu$ in $\dcoh{E}$, the collection 
\[
\{\mcf \in \mathrm{Perf}(E) \mid  \mbf R q_*(\mcu \otimes^{\mbf L}\mcf) = 0 \}
\]
is a thick subcategory of $\mathrm{Perf}(E)$.
\end{proof}

\begin{proof}[Proof of Theorem \ref{thm:lift}] 
As noted earlier, part (1) is \cite[Lemma~2.2]{Iyengar/Ma/Walker/Zhuang:2024}, so we focus on part (2). Recall that $K$ is the Koszul complex on minimal reduction $\fm$ and that $\mck$ is its pull-back to $B$. By the projection formula, we have
\[
\mbf{R} p_* \mcf \otimes K \simeq \mbf{R} p_*(\mcf \otimes \mck),
\]
and so by Lemma \ref{le:Ucomplex} it suffices to prove that $ \mbf{R}p_*(\mcf \otimes \mck)$ is a $k$-vector space.

By assumption, $ \mbf{L}^i j^* \mcf = 0$ for all $i \ne 0$. Moreover, $\mch_n(\mck) \cong j_* j^* \mch_n(\mck)$ with $j^*\mch_n(\mck)$ locally free, by the first part of Lemma \ref{le:Hi}, so there are isomorphisms
\begin{align*}
\mcf \lotimes j_*j^*\mch_p(\mck) 
        &\simeq j_*(\mbf{L}j^*\mcf\lotimes j^*\mch_p(\mck))\\
        &\simeq j_*(\mcu \otimes j^*\mch_p(\mck))
\end{align*}
in the derived category of $B$. Thus the standard spectral sequence 
\[
\operatorname{\mcal{T}or}^{\mco_B}_q(\mcf, \mch_p(\mck)) \Longrightarrow \mch_{p+q}(\mcf\otimes \mck)
\]
collapses to yield an isomorphism  $\mch_n(\mcf \otimes \mck) \cong j_*\left(\mcu \otimes j^*\mch_n(\mck)\right)$.  
By the last part of Lemma \ref{le:Hi} this gives 
\[
\mbf{R}p_*\mch_n(\mcf \otimes \mck) \cong
\begin{cases}
    \iota_* q_* \mcu & n = 0 \\
    0 & \text{otherwise}.
\end{cases}
\]
We thus have an quasi-isomorphism of complexes of $R$-modules
\[
   \mbf{R}p_*(\mcf \otimes \mck) \simeq \iota_* q_*\mcu\,,
\]
and the term on the right is a $k$-vector space. 
\end{proof}

\begin{remark}
The proof gives more, namely, if $\mbf{L}j^*\mcf \simeq j^*\mcf$ and the mapping cone $\mcal C$ of $\mck\to j_*\mco_E$ has the property that
\[
\mbf{R}p_*(\mcf\lotimes \mcal C)\simeq 0
\]
then $\mbf{R}p_*\mcf$ is an Ulrich module.
\end{remark}

Theorem \ref{thm:lift} gives that $R$ admits an Ulrich module provided there is an Ulrich sheaf on the exceptional fiber
$E$ that extends to the blow-up $B$. In general, it is difficult to determine whether a given Ulrich sheaf on $E$
extends, but there is one situation in which it always does. 

\begin{example} 
Suppose $R$ is the localization of a standard graded $k$-algebra $A$ at its homogeneous maximal ideal. In this case we may identify $\agr_\fm(R)$ with $A$ and hence $E$ with $\proj A$. Moreover,  the scheme $B$ is the geometric line bundle over $E$ associated to the invertible  sheaf $\mco_E(1)$, and $j$ is the zero section of this bundle. 

If $\mcu$ is any Ulrich sheaf on $E$, then setting $\mcf = p^* \mcu$, where $p\colon B \to E$ is the structural map for this line bundle, and using that $p \circ j = id_E$,  we see that $\mcf$ is an extension of $\mcu$. In this case $ \mbf{R}p_* \mcf \cong \bigoplus_{t} \hh^0(X, \mcu(t))$, regarded as a graded $A$-module in the standard way. So, Theorem~\ref{thm:lift} recovers \cite[Proposition~2.1]{Eisenbud/Schreyer:2003} that an Ulrich sheaf $\mcu$ on $\proj A$ determines an Ulrich module on $R$, namely, the one obtained by localizing  $\bigoplus_{t} \hh^0(X, \mcu(t))$ at the homogeneous maximal ideal of $A$. 
\end{example}

\section{Ulrich modules on dimension two local rings}
\label{se:dim2}
In this section we establish the existence of Ulrich modules on
certain two-dimensional local rings. Recall that a $k$-scheme is
said to be {\em geometrically reduced} provided $X \times_{\spec(k)} \spec(\overline{k})$ is reduced, where $\overline{k}$ is the algebraic closure of $k$. 

\begin{lemma} 
\label{lem3}
Let $(X, \mco_X(1))$ be projective $k$-scheme of finite type, with $\dim X=1$. If $X$ is geometrically reduced and  equidimensional, then $X$ admits an Ulrich vector bundle;  if in addition $k$ is algebraically closed, $X$ admits an Ulrich line bundle. 
\end{lemma}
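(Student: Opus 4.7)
The plan is to first establish the Ulrich line bundle statement under the algebraically closed hypothesis, and then derive the vector bundle statement in general by descent and finite pushforward. Since $\dim X=1$, a nonzero coherent sheaf $\mcu$ is Ulrich precisely when $\hh^0(X,\mcu(-1))=0=\hh^1(X,\mcu(-1))$; by Riemann--Roch on a proper one-dimensional $k$-scheme this pair of vanishings is equivalent to the numerical condition $\chi(\mcu(-1))=0$ together with $\hh^0(X,\mcu(-1))=0$.

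For the line bundle case, assume $k$ is algebraically closed; then $X$ is reduced of pure dimension one. A line bundle on a disjoint union is a tuple of line bundles on the components, so I would first reduce to $X$ connected. Set $p_a\coloneqq 1-\chi(\mco_X)$. Fix a component $\operatorname{Pic}^{d}(X)$ of the Picard scheme on which every line bundle $\mcal{M}$ satisfies $\chi(\mcal{M}(-1))=0$; for a connected reduced projective curve this is a smooth irreducible variety of dimension $p_a$. I would then exhibit $\mcal{N}\in\operatorname{Pic}^{d-e}(X)$ (the shift of $\mcal{M}$ by $\mco_X(-1)$, where $e\coloneqq\deg\mco_X(1)$) with $\hh^0(X,\mcal{N})=0$ by showing that the effective locus inside $\operatorname{Pic}^{d-e}(X)$ is a proper closed subset: it is the image of an Abel--Jacobi morphism from a parameter space of effective Cartier divisors of the prescribed (multi)degree, and this source has dimension at most $p_a-1<p_a$. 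Setting $\mcal{M}\coloneqq\mcal{N}(1)$ then produces the desired Ulrich line bundle.

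For the vector bundle case over a general field $k$, apply the line bundle case to $X_{\overline{k}}$, which is reduced and equidimensional of dimension one, to obtain an Ulrich line bundle $\mcl$ on it. A standard noetherian-approximation argument over the filtered colimit $\overline{k}=\varinjlim_{k'/k\text{ finite}} k'$ shows that $\mcl$ descends to a line bundle $\mcl'$ on $X_{k'}$ for some finite extension $k'/k$; flat base change along the faithfully flat map $\spec\overline{k}\to\spec k'$ promotes Ulrichness of $\mcl$ to Ulrichness of $\mcl'$. The projection $\pi\colon X_{k'}\to X$ is finite flat, so $\pi_*\mcl'$ is a vector bundle of rank $[k':k]$; the projection formula, combined with the fact that $\pi$ is affine, yields $\hh^i(X,\pi_*\mcl'(-1))\cong \hh^i(X_{k'},\mcl'(-1))=0$ for $i=0,1$, so $\pi_*\mcl'$ is Ulrich on $X$.

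The main obstacle is controlling the Abel--Jacobi dimension count when $X$ is reducible but connected, since the theory is cleanest for integral curves. The resolution is to proceed component-wise with multidegrees: for a multidegree $(d_1,\dots,d_r)$ summing to $p_a-1$, the parameter space of effective Cartier divisors of this multidegree still has dimension at most $p_a-1<p_a$. Alternatively, one reduces the vanishing $\hh^0(X,\mcal{N})=0$ to a component-wise statement via the injection $\hh^0(X,\mcal{N})\hookrightarrow\bigoplus_i \hh^0(X_i,\mcal{N}|_{X_i})$ induced by the inclusion $\mco_X\hookrightarrow\bigoplus_i\mco_{X_i}$ into the partial normalization of $X$.
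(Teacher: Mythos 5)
Your overall architecture --- construct an Ulrich line bundle over $\overline{k}$, descend it to a finite extension $\ell/k$, and push forward along the finite flat map $X_\ell \to X$ --- matches the paper exactly in its second half, and that part of your argument is fine. The gap is in the construction of the Ulrich line bundle when $X$ is connected but reducible. Your dimension count bounds only the locus of line bundles $\mcal N$ admitting a section whose zero scheme is an effective Cartier divisor; a nonzero section may instead vanish identically on some irreducible component, and such sections are invisible to the Abel--Jacobi map. This is not a removable technicality: for some multidegrees summing to $p_a-1$, \emph{every} line bundle of that multidegree has a section. Take $X$ to be two copies of $\bbp^1$ glued at two points, so $p_a=1$, and the multidegree $(2,-2)$: there are no effective Cartier divisors of this multidegree at all, yet every such line bundle has $\hh^0\ge 1$, since the three-dimensional space of sections on the first component need only vanish at the two nodes. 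Neither of your proposed fixes closes this gap. The multidegree refinement of the dimension count has exactly the same blind spot. The component-wise reduction via $\hh^0(X,\mcal N)\hookrightarrow \bigoplus_i \hh^0(X_i, \mcal N|_{X_i})$ would require $\hh^0(X_i,\mcal N|_{X_i})=0$ for every $i$, but a multidegree with total degree $p_a(X)-1$ can force $\deg(\mcal N|_{X_i})\ge p_a(X_i)$ on some component (the same example), in which case $\mcal N|_{X_i}$ always has sections. The approach can be salvaged --- choose the multidegree suitably and stratify the locus $\{\hh^0\neq 0\}$ by the set of components on which the section vanishes --- but that is a genuinely additional argument you have not supplied.

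For comparison, the paper avoids the Picard scheme entirely: it starts with any line bundle $\mcl$ satisfying $\hh^1(X,\mcl)=0$ and inductively lowers $\hh^0$ by one, replacing $\mcl$ by the kernel of $\mcl\twoheadrightarrow i_*i^*\mcl$ where $i$ is the inclusion of a regular closed point at which some nonzero section does not vanish; reducedness guarantees such a point exists, and equidimensionality plus regularity guarantee the kernel is again a line bundle. That elementary induction handles reducible curves with no extra work, which is exactly where your argument gets stuck.
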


\begin{proof}
  Suppose that $k$ is algebraically closed. We need find a line bundle $\mcl$ on $X$ with no cohomology, that is to say, with $\hh^i(X, \mcl) = 0$ for all $i$, for then $\mcl(-1)$ is an Ulrich sheaf on $X$.
  
Let $\mcl$ be a line bundle on $X$ with $\hh^1(X, \mcl) = 0$; for instance, $\mcl = \mco_E(m)$ for $m \gg 0$.
If $\hh^ 0(X, \mcl) = 0$ we are done. If not, choose a nonzero element $f \in \hh^0(X, \mcl)$.  By a \emph{zero} of $f$ we mean a closed point $x$ of $X$  such that lies in the kernel of the canonical map $\hh^0(X, \mcl) \to \hh^0(X, i^* i_* \mcl) \cong k$, where $i\colon \{x\} \to X$ is the inclusion map. 

We claim that $Z(f)$, the set of  zeroes of $f$, is  a proper, Zariski closed subset of the set of all closed points of $X$. 

To see this, let $\{U_i\}$ be an open covering of $X$ by non-empty, affine open subsets on which the bundle $\mcl$ is trivial. For each $U = U_i$ in this collection we have $U = \spec A$ for some reduced one-dimensional noetherian ring $A$. Upon choosing a trivialization of the restriction of $\mcl$ to $U$, the element $f$ is given by an element $g \in A$. The set $Z(f) \cap U$ corresponds to $Z(g)$, the set of maximal ideals of $A$ containing $g$, which is clearly a closed subset of $\spec A$.  This proves $Z(f)$ is closed. Since $f$ is nonzero, there is at least one $i$ such that  the restriction of $f$ to $U = U_i$ is nonzero, and for this $i$ the corresponding $g \in A$ is nonzero. Since $A$ is reduced, $V(g)$ is a proper subset of the set of maximal ideals of $A$. It follows that $Z(f) \ne X$. 

Let $V$ be the set of regular closed points of $X$. Since $X$ is reduced, $V$ is an open, dense subset of $X$ and thus  $V \cap (X \setminus Z(f)) \ne \emptyset$. 
Pick any point $x$ in this set. So, $x$ is a regular closed point of $X$ and  $x$ is not a zero of $f$. 
  
Since $k$ is algebraically closed,  $\hh^0(X, i^*i_*  \mcl)$ is a one-dimensional $k$-vector space, and hence the nonzero map $\hh^0(X, \mcl) \to \hh^0(X, i^* i_* \mcl)$ must
be surjective.  Since $x$ is a regular point, and the local ring $\mco_{X,x}$ is of dimension 1 (this is where one needs that $X$ is equidimensional) the kernel $\mcl'$ of the canonical map $\mcl \twoheadrightarrow  i_* i^* \mcl$ is also a line bundle. The  exact sequence in cohomology
\[
  0 \to \hh^0(X, \mcl') \to \hh^0(X, \mcl) \to \hh^0(X, i_* i^* \mcl)  \to \hh^1(X, \mcl') \to \hh^1(X, \mcl) \to 0
\]
thus  shows that $\hh^1(X, \mcl') = 0$ and $\dim_k \hh^0(X, \mcl') = \dim_k \hh^0(X, \mcl) - 1$. Continuing in this fashion we arrive at a bundle with no cohomology, as desired.

Suppose $k$ is general field.  Set
\[
X_{\overline{k}} \coloneqq  X \times_{\spec(k)} \spec(\overline{k})\,;
\]
this is reduced, by hypothesis. Since $X$ is equidimensional and the map $X_{\overline k}\to X$ is flat and integral, $X_{\overline k}$ is
equidimensional as well.  Hence $X_{\overline k}$ admits an Ulrich line bundle $\mcl$, as has been proved already. This bundle is extended
from a line bundle $\mcl'$ on $X_{\ell}$ for some finite extension $\ell$ of $k$ contained in $\overline{k}$, in the sense that
$\mcl = p^* \mcl'$ where $p\colon X_{\overline{k}} \to X_\ell$ is the canonical map. Since
\[
\hh^*(X_\ell, \mcl'(-1)) \otimes_\ell \overline{k} \cong \hh^*(X_{\overline{k}}, \mcl(-1)),
\]
it follows that $\mcl'$    is an Ulrich line bundle on $X_\ell$. 
   
The canonical map $q\colon X_\ell \to X$ is finite and linear, that is to say, $q^* \mco_{X}(1) \cong \mco_{X_\ell}(1)$, and hence $q_* \mcl'$ is a Ulrich sheaf on $X$. Moreover,
it is locally free  with constant rank equal to the degree of the field extension $\ell/k$. 
\end{proof}

\begin{remark}
The hypotheses of Lemma \ref{lem3} of imply $X$ is Cohen-Macaulay. When $X$ is not Cohen-Macaulay, there cannot be any Ulrich bundles on $X$.
\end{remark}        

\begin{theorem}  
\label{thm:curves}
Let $(R, \fm ,k)$ be a complete local ring of dimension two with $k$ infinite. Assume $R$ is equidimensional and that $\proj (\agr_\fm(R))$ is geometrically reduced. Then $R$ admits an Ulrich module $U$ that is locally free of constant rank $r > 0$ on the punctured spectrum $\spec R \setminus \{\fm\}$; if $k$ is algebraically closed, then such a module exits with $r = 1$. 
\end{theorem}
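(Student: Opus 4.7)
The plan is to assemble the results from Sections~\ref{se:Ulrich-things} and \ref{se:UM-from-US}: produce an Ulrich bundle on the one-dimensional curve $E$, extend it across the blowup $B$, and push it down to $\spec R$.

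First I would check that $E = \proj(\agr_\fm R)$ satisfies the hypotheses of Lemma~\ref{lem3}. Geometric reducedness is assumed. Equidimensionality should be deduced from the equidimensionality of $R$: the irreducible components of $B$ correspond to the blowups of $R/\fp$ at their closed points as $\fp$ ranges over the minimal primes of $R$, each of which is two-dimensional, so each component of $B$ is two-dimensional and meets $E$ in a curve. Thus $E$ is one-dimensional and equidimensional. Lemma~\ref{lem3} then produces an Ulrich vector bundle $\mcu$ on $E$ of some constant rank $r>0$; when $k$ is algebraically closed, we may take $r=1$ with $\mcu$ a line bundle.

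Next I would invoke Corollary~\ref{cor:no-obstruction}: since $R$ is complete of dimension two, the vanishing of the relevant obstruction groups (which lie in $\hh^2$ of a one-dimensional scheme) is automatic, so the bundle $\mcu$ extends to a vector bundle $\mcf$ on $B$. Theorem~\ref{thm:lift}(2) then immediately yields that $U \coloneqq \push{p}\mcf$ is an Ulrich $R$-module. To pin down the rank on the punctured spectrum, I would use that $p\colon B\to\spec R$ is the blowup at $\{\fm\}$ and hence restricts to an isomorphism over $\spec R\setminus\{\fm\}$. Since $\push{p}\mcf$ is concentrated in degree zero (by Theorem~\ref{thm:lift}(2)) and $\mcf$ is a vector bundle on $B$, the restriction of $U$ to $\spec R\setminus\{\fm\}$ coincides with $\mcf$ under this isomorphism, so $U$ is locally free there of constant rank equal to $\rank\mcf = r$.

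The main potential obstacle is the bookkeeping around equidimensionality of $E$: one must confirm that the equidimensionality of $R$ propagates through $\rees_\fm(R)$ and $\agr_\fm(R)$ to the scheme $E$, which likely deserves a careful argument using the explicit description of the components of $B$ in terms of the minimal primes of $R$ (and the fact that $R$, being complete and equidimensional, is catenary). Once this is in hand, the rest of the proof is a direct chain of invocations of Lemma~\ref{lem3}, Corollary~\ref{cor:no-obstruction}, and Theorem~\ref{thm:lift}.
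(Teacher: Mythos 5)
Your proposal is correct and follows essentially the same route as the paper's proof: equidimensionality of $E$ via the (universal) catenariness of the complete equidimensional ring $R$ (the paper simply cites \cite[Proposition~5.4.8]{Huneke/Swanson:2006} for this step), then Lemma~\ref{lem3}, Corollary~\ref{cor:no-obstruction}, and Theorem~\ref{thm:lift}(2), with the rank on the punctured spectrum read off from the isomorphism $B \setminus E \cong \spec R \setminus \{\fm\}$. No substantive differences.
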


\begin{proof}
Set $E = \proj (\agr_\fm(R))$. The ring $R$, being complete, is universally catenary, and since it is assumed to be equidimensional, $E$ is also equidimensional; see, for instance,
  \cite[Proposition~5.4.8]{Huneke/Swanson:2006}.  By assumption $E$ is geometrically reduced, and Lemma \ref{lem3} applies to give an Ulrich
  vector bundle $\mce$ on $E$ of constant rank $r > 0$; when $k$ is algebraically closed we may take $r=1$.  Since $\dim E = 1$, by
  Corollary \ref{cor:no-obstruction} there is a coherent sheaf $\mcf$ on $B$ that is locally free of rank $r$ and such that
  $\mce \cong \mbf{L} j^* \mcf$.  By Theorem \ref{thm:lift}, $U\coloneqq p_* \mcf$ is an Ulrich module on $R$.  The coherent sheaf on
  $\spec(R) \setminus \{\fm\}$ given as the restriction of $U$ coincides with the restriction of $\mcf$ to $B \setminus E$ under the
  canonical isomorphism $B \setminus E \cong \spec(R) \setminus
  \{\fm\}$. So, since $\mcf$ is locally free of constant rank $r$ on
  $B$, the module $U$ is locally free of constant rank $r$ on the punctured spectrum.
\end{proof}

\section{Modules of finite projective dimension}
\label{sec:application}
In this section we give an application of the existence of Ulrich modules for certain two-dimensional local rings to the \emph {Length Conjecture} stated in \cite{Iyengar/Ma/Walker:2022}. First, some notation: We write $G_0(R)$ for the Grothendieck of $R$; that is, $G_0(R)$ is the abelian group generated by isomorphism classes of finitely generated $R$-modules modulo relations coming from short exact sequences of such. Set $G_0(R)_\bbQ = G_0(R) \otimes_\bbZ \bbQ$.

Set $\chi(N) = \sum_i \length_R H_i(N)$ for a bounded complex $N$ with finite length homology,  and for a finite free complex $F$ having finite length homology and a finitely generated module $M$, set 
\[
\chi(F, M) = \chi(F \otimes_Q M)\,.
\]
For a fixed $F$, $\chi(F, -)$ is additive on short exact sequences and thus we may extend its defintion to
$\chi(F, \alpha) \in \bbQ$  for any $\alpha \in G_0(R)_\bbQ$.
We write $E(R)_\bbQ$ for the quotient of $G_0(R)_\bbQ$ modulo classes that are numerically equivalent to zero; that is to
say, modulo classes $\alpha \in G_0(R)_\bbQ$ such that $\chi(F, \alpha) = 0$ for all finite free complexes $F$ having finite length
homology.

\begin{proposition}  \label{prop:E}
Let $R$ be local ring that is a homomorphic image of a regular local ring and that is equidimensional of dimension two. The map sending a finitely generated $R$-module $M$ to the tuple $(length_{R_\fp}(M_\fp))_{\fp \in \minspec R}$,
where $\minspec R$ denotes the set of minimal primes of $R$, induces an isomorphism
\[
  E(R)_\bbQ \xrightarrow{\cong} \bigoplus_{\fp \in \minspec R} \bbQ.
  \]
\end{proposition}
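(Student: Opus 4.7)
The strategy is to identify the kernel of the length map $L$ with the subgroup of numerically trivial classes in $G_0(R)_\bbQ$.

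I would first check that $L$ is well-defined on $G_0(R)_\bbQ$ (by additivity of length on short exact sequences) and surjective (since $[R/\fp]$ maps to the standard basis vector $e_\fp$ for each $\fp\in\minspec R$). Writing $G_0(R)^{(1)}_\bbQ$ for the subgroup generated by classes of modules of dimension at most one, a prime filtration argument yields $[M]\equiv \sum_{\fp\in\minspec R}\length_{R_\fp}(M_\fp)\,[R/\fp]$ modulo $G_0(R)^{(1)}_\bbQ$, so $\ker L=G_0(R)^{(1)}_\bbQ$.

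It then suffices to show that $G_0(R)^{(1)}_\bbQ$ coincides with the subgroup $N\subseteq G_0(R)_\bbQ$ of numerically trivial classes. For the inclusion $N\subseteq G_0(R)^{(1)}_\bbQ$, pair $\alpha\in N$ against Koszul complexes $K(\underline x)$ on systems of parameters of $R$. Associativity for Hilbert--Samuel multiplicities, combined with the vanishing of $e(\underline x;-)$ on classes of dimension at most one, yields
\[
0=\chi(K(\underline x),\alpha)=\sum_{\fp\in\minspec R}L(\alpha)_\fp\cdot e(\underline x;R/\fp).
\]
Varying $\underline x$, one then argues that the multiplicity functionals $\underline x\mapsto e(\underline x;R/\fp)$ are linearly independent as $\fp$ ranges over $\minspec R$ (exhibited, for example, by selecting parameters whose leading behaviors on the different components of $\spec R$ are scaled by distinct powers), whence $L(\alpha)=0$.

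The main obstacle is the reverse inclusion $G_0(R)^{(1)}_\bbQ\subseteq N$: every finitely generated module of dimension at most one has numerically trivial class. By additivity this reduces to showing $[R/\fq]$ is numerically trivial for each prime $\fq$ of height at least one. When $\fq=\fm$, equidimensionality suffices: localizing $F$ at any minimal prime produces a bounded exact complex of finite free modules over a local ring, which is split exact, so $\sum_i(-1)^i\rank F_i=0$ and thus $\chi(F,k)=\sum_i(-1)^i\rank F_i=0$. When $\height\fq=1$ the elementary rank bookkeeping fails, and here I would bring in the hypothesis that $R=Q/I$ is a homomorphic image of a regular local ring $Q$: via a change-of-rings spectral sequence I would rewrite $\chi(F,R/\fq)$ as an alternating sum of $Q$-theoretic intersection multiplicities $\chi_Q(H_s(F),R/\fq)$ together with contributions involving the higher $\Tor^Q_{\ge 1}(H_s(F),R)$, and then invoke Serre's vanishing theorem over $Q$ (applicable since $\dim H_s(F)+\dim R/\fq=0+1<\dim Q$) to conclude that each of the resulting terms vanishes. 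Carrying out this change-of-rings bookkeeping is where the real work lies.
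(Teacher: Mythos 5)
There is a genuine gap, and it sits exactly where you locate ``the real work'': the claim that $[R/\fq]$ is numerically trivial when $\dim R/\fq=1$. Your change-of-rings plan does not close. Writing $R=Q/I$ and replacing $F$ by a finite free $Q$-complex $\tilde F$ quasi-isomorphic to it, the identity you can actually extract is
\[
\chi^Q(\tilde F\otimes_Q M)\;=\;\sum_{q\ge 0}(-1)^q\,\chi^R\bigl(F,\Tor^Q_q(R,M)\bigr)\;=\;\chi^R\bigl(F,[R\otimes^{\mathbf L}_Q M]\bigr),
\]
because $F\otimes_R M$ and $F\otimes^{\mathbf L}_Q M\simeq F\otimes_R(R\otimes^{\mathbf L}_Q M)$ are \emph{different} objects; the left-hand side does vanish by d\'evissage and Serre's vanishing over $Q$, but this only tells you that $\chi^R(F,-)$ kills the class $[R\otimes^{\mathbf L}_Q M]=\sum_q(-1)^q[\Tor^Q_q(R,M)]$, not the class $[M]$. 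Worse, these two classes genuinely differ: localizing at the generic point $\fq$ of $\operatorname{Supp}M$ gives $\chi^{Q_\fq}(R_\fq,\kappa(\fq))$, which is $0$ by Serre's vanishing whenever $\dim Q>2$ (since $\dim R_\fq+0=1<\dim Q_\fq$), so $[R\otimes^{\mathbf L}_Q M]$ is a finite-length class and your identity degenerates to $0=0$. There is no elementary bookkeeping route here: the statement you need is precisely $E(R)_{(1)}=0$, equivalently that no $1$-cycle on $\spec R$ is numerically nontrivial, and the paper obtains it by citing Kurano's Proposition~3.7, whose proof runs through the singular Riemann--Roch map $\tau$ and the theory of local Chern characters (note that the analogous vanishing with only one module of finite projective dimension \emph{fails} one dimension up, by Dutta--Hochster--McLaughlin, so some such machinery is unavoidable).

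Two smaller points. First, your reduction of the other inclusion to the linear independence of the functionals $\underline x\mapsto e(\underline x;R/\fp)$ over all systems of parameters is also not an argument as written: ``parameters whose leading behaviors on the different components are scaled by distinct powers'' must still form a system of parameters for all of $R$ simultaneously, and producing enough of them to make the multiplicity matrix nonsingular requires proof (the paper again gets this from Kurano's identification of $E(R)_{(\dim R)}$ with $\bigoplus_{\fp}\bbQ$). Second, the skeleton of your proposal --- kernel of the length map equals classes of dimension at most one, then match that with numerical triviality --- is exactly the content of the weight decomposition $E(R)_\bbQ=\bigoplus_i E(R)_{(i)}$ together with $E(R)_{(0)}=E(R)_{(1)}=0$ and $E(R)_{(2)}\cong\bigoplus_\fp\bbQ$ recorded in Proposition~\ref{prop:wd}, so the architecture is right; what is missing is the Riemann--Roch input that makes both halves true.
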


\begin{proof} 
This follows from~\cite[Proposition~3.7]{Kurano:2004};
see also Proposition \ref{prop:wd} below. 
\end{proof}

\begin{theorem}  
\label{thm:LC}
Let $(R, \fm, k)$ be a local ring dimension two  such that its $\fm$-adic completion is equidimensional and the projective $k$-scheme $\proj(\agr_\fm(R))$ is geometrically reduced. For any finite free $R$-complex $F= 0 \to F_2 \to F_1 \to F_0 \to 0$ that is minimal and has non-zero, finite length homology, one has
\[
\chi(F) \geq e(R)\cdot \max\left\{\rank F_0,  \frac{\rank F_1 }{2}, \rank F_2 \right\}.
\]
\end{theorem}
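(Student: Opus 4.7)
The plan is to tensor $F$ with an Ulrich module and read the desired inequality off the hyperhomology spectral sequence of $F\otimes U$ with a Koszul complex on a minimal reduction of $\fm$. First I would perform the standard reductions: passing to the $\fm$-adic completion, then to a faithfully flat extension with infinite residue field, preserves $\chi(F)$, $e(R)$, and every $\rank F_i$. Thus I may assume $R$ is complete with infinite residue field; in particular $R$ is a homomorphic image of a regular local ring by Cohen's structure theorem. Theorem~\ref{thm:curves} then supplies an Ulrich module $U$ locally free of constant rank $r$ on the punctured spectrum, and Proposition~\ref{prop:E} forces $[U] = r[R]$ in $E(R)_\bbQ$ since $U_\fp \cong R_\fp^r$ at every minimal prime $\fp$. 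Consequently $\chi(F\otimes U) = \chi(F, U) = r\chi(F)$. Applying the same fact to $[k]$, which vanishes in $E(R)_\bbQ$ because $k_\fp = 0$ for every minimal prime, gives $\chi(F,k) = \sum_i(-1)^i\rank F_i = 0$. Writing $a, b, c$ for $\rank F_0, \rank F_1, \rank F_2$, this reads $b = a+c$, so $\max\{a, b/2, c\} = \max\{a, c\}$ and the target inequality reduces to $\chi(F) \geq e(R) \max\{a, c\}$.

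Let $\bs x = x_1, x_2$ generate a minimal reduction of $\fm$, and let $K = K(\bs x; R)$ be the corresponding Koszul complex. Because $U$ is maximal Cohen-Macaulay with $\fm U = \bs x U$, the sequence $\bs x$ is regular on $U$ and $U/\bs x U \cong k^{re}$ with $e = e(R)$; therefore $K\otimes U \simeq k^{re}$ in the derived category and
\[
F\otimes K\otimes U \;\simeq\; F\otimes k^{re}.
\]
By minimality of $F$ the right-hand side has zero differentials, so its homology is $k^{re\cdot \rank F_n}$ in degree $n = 0, 1, 2$ and zero in all other degrees. I would then run the hyperhomology spectral sequence
\[
E^2_{p,q} = H_p(K; H_q(F\otimes U)) \;\Longrightarrow\; H_{p+q}(F\otimes K\otimes U)
\]
and verify -- by tracking the differentials $d_r$, each having trivial source or target for degree reasons -- that $E^\infty_{p,q}$ agrees with $E^2_{p,q} = H_p(K; H_q(F \otimes U))$ at each position relevant below. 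The vanishing of $H_n(F\otimes K\otimes U)$ in degrees $3$ and $4$ then forces $E^\infty_{2,2} = (0 :_{H_2(F\otimes U)} \bs x) = 0$ and $E^\infty_{2,1} = (0 :_{H_1(F\otimes U)} \bs x) = 0$. Since a nonzero finite length module has a nonzero element annihilated by $\bs x$ (the $x_i$ act nilpotently on it), $H_1(F\otimes U) = H_2(F\otimes U) = 0$.

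With those vanishings, $\chi(F\otimes U) = \length H_0(F\otimes U) = r\chi(F)$, and only the positions $(0,0)$ and $(2,0)$ contribute nontrivially to the $E^\infty$ page. The first gives $E^\infty_{0,0} = H_0(F\otimes U)/\bs x H_0(F\otimes U)$ as all of $H_0$ of the total complex, of length $re \cdot a$; the second gives $E^\infty_{2,0} = (0 :_{H_0(F\otimes U)} \bs x)$ as all of $H_2$ of the total complex, of length $re\cdot c$. Since the former is a quotient and the latter a submodule of $H_0(F\otimes U)$, one obtains
\[
\length H_0(F\otimes U) \;\geq\; \max\{re\cdot a,\; re\cdot c\} \;=\; re\cdot \max\{a, c\},
\]
and dividing by $r$ yields $\chi(F) \geq e(R)\max\{a, c\} = e(R)\max\{a, b/2, c\}$, as desired. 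The main obstacle I expect is the spectral-sequence bookkeeping: checking that the $d_r$ differentials at positions $(2,1), (2,2), (0,0), (2,0)$ vanish for degree reasons so that the identified $E^\infty$ terms really agree with Koszul homology of $H_q(F\otimes U)$, thereby allowing two independent lower bounds on $\length H_0(F\otimes U)$ -- one from a quotient, one from a submodule -- to be extracted from a single computation.
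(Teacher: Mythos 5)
Your proposal is correct and follows essentially the same route as the paper: reduce to the complete case with infinite residue field, produce an Ulrich module $U$ via Theorem~\ref{thm:curves} with $[U]=r[R]$ in $E(R)_\bbQ$ by Proposition~\ref{prop:E}, and compare $K\otimes U\otimes F\simeq k^{e_R(U)}\otimes F$ with the Koszul homology of $\hh_0(U\otimes F)$. The only cosmetic differences are that you re-derive the vanishing of $\hh_{>0}(U\otimes F)$ by a spectral sequence (the paper invokes the quasi-isomorphism $U\otimes F\simeq \hh_0(U\otimes F)$ coming from $U$ being maximal Cohen--Macaulay) and that you dispose of the $\rank F_1$ bound via $\rank F_1=\rank F_0+\rank F_2$ rather than estimating $\hh_1(K\otimes \hh_0(U\otimes F))$ directly as the paper does.
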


\begin{proof}
By passing to the completion of $R$ we may assume it is complete. There exists a faithfully flat local and integral extension $(R, \fm, k) \subseteq (R', \fm', k')$ such $\fm R' = \fm'$ and $k'$ is an infinite algebraic field extension of $k$. Then 
\[
\proj(\agr_{\fm'} R') = \proj(\agr_\fm(R)) \times_{\spec k} \spec k'\,,
\]
and hence it is also geometrically reduced. Since $\chi^R(F) = \chi^{R'}(R' \otimes_R F)$, passing to $R'$ we may assume also that the residue field of $R$ is infinite.

By  Theorem~\ref{thm:curves}, $R$ admits an Ulrich module $U$ such that for some $r > 0$, the $R_\fp$-module $U_\fp$ is free of rank $r$ for each minimal prime $\fp$. By Proposition~\ref{prop:E}, one has $[U] = r [R]$ in $E(R)_\bbQ$, and from the formula
\[
e_R(M) = \sum_{\fp \in \minspec R} \length_{R_\fp}(M_\fp) e(R/\fp)\,,
\]
 we have $e_R(U) = r \cdot e(R)$.  Combining these facts gives
\begin{equation} \label{eq:special}
\chi(F  \otimes_R U) = r\cdot \chi(F) = \frac{e_R(U) \chi(F)}{e(R)}.
\end{equation}
An argument in \cite{Iyengar/Ma/Walker:2022} completes the proof: Let $K$ be the Koszul complex on a minimal reduction of the maximal ideal. Since $U$ is maximum Cohen-Macaulay, we have a quasi-isomorphism  $U \otimes_R F \simeq \hh_0(U \otimes_R F)$ and since $U$ is  Ulrich we have a quasi-isomorphism $K \otimes_R U \simeq k^{e_R(U)}$. Thus
\[
K \otimes_R \hh_0(U \otimes_R F) \simeq k^{e_R(U)} \otimes_R F,
\]
and, since $F$ is minimal and  $\rank_R(K_i) = \binom 2i$, this gives
\[
\binom{2}{i} \length_R \hh_0(U \otimes_R F) \geq   e_R(U) \rank_R(F_i),
\, \text{ for $0 \leq i \leq 2$.} 
\]
On the other hand, from \eqref{eq:special} we have
\[
\length_R \hh_0(U \otimes_R F) =
\chi(F \otimes_R U) =\frac{e_R(U) \chi(F)}{e(R)}
\]
and the result follows.
\end{proof}

\begin{corollary}
    \label{co:CM-version}
For $R$ as in Theorem~\ref{thm:LC}, and any nonzero $R$-module $M$ of finite projective dimension, one has
\[
\length_RM \geq e(R) \max\left\{\beta_0(M),  \frac{\beta_1(M)}{2}, \beta_2(M)\right\}.
\]
\end{corollary}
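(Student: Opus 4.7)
The plan is to deduce this corollary immediately from Theorem~\ref{thm:LC} by applying it to the minimal free resolution of $M$. First I would dispose of the trivial case in which $\length_R M$ is infinite; the asserted inequality holds vacuously there, so I may assume $M$ has finite length.

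Next I need to produce a minimal finite free complex of the shape required by Theorem~\ref{thm:LC}. I take $F$ to be the minimal free resolution of $M$. Because $M$ has finite length we have $\depth_R M = 0$, and since $M$ has finite projective dimension the Auslander--Buchsbaum formula gives $\mathrm{pd}_R M = \depth R \leq \dim R = 2$. Hence $F$ has the required form $0 \to F_2 \to F_1 \to F_0 \to 0$, and by minimality $\rank F_i = \beta_i(M)$ for each $i$. Its only nonzero homology is $\hh_0(F) = M$, which has finite length, so $\chi(F) = \length_R M$. (As an aside, the new intersection theorem forces $\mathrm{pd}_R M = \dim R = 2$, and hence $R$ to be Cohen--Macaulay, but only the one-sided bound $\mathrm{pd}_RM\le 2$ is needed below.)

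With these identifications in hand, Theorem~\ref{thm:LC} applied to $F$ yields
\[
\length_R M \;=\; \chi(F) \;\geq\; e(R)\cdot\max\!\left\{\beta_0(M),\,\tfrac{\beta_1(M)}{2},\,\beta_2(M)\right\},
\]
which is the claim. I do not anticipate any serious obstacle here: all the substance has been packaged into Theorem~\ref{thm:LC}, and this corollary is simply its translation from statements about an arbitrary finite minimal free complex $F$ to statements about a module $M$ and its Betti numbers.
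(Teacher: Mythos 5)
Your proposal is correct and follows essentially the same route as the paper: reduce to the case $\length_R M < \infty$, note that then $\mathrm{pd}_R M \le \dim R = 2$ (the paper invokes the equality $\mathrm{pd}_R M = \dim R$ and Cohen--Macaulayness of $R$, which you correctly observe are not strictly needed), and apply Theorem~\ref{thm:LC} to the minimal free resolution of $M$. No gaps.
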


\begin{proof}
We can assume $\length_RM$ is finite, and then the ring $R$ has to be Cohen-Macaulay, and the projective dimension of $M$ equals $\dim R$. Now one can apply Theorem~\ref{thm:LC}(2) to the minimal free resolution of $M$. 
\end{proof}

\section{The non-equidimensional case}
\label{sec:mixeddim}
In this section we extend our results in the previous section to local rings of dimension two that are not equidimensional, by passing to quotients that are. We start with some generalities: For a local ring $R$, following \cite[Definition 2.1]{Hochster/Huneke:1994}, let $j(R)$  be the largest ideal whose dimension, as an $R$-module, is strictly less than $\dim R$. Equivalently, if
\[
0 = P_1 \cap \cdots \cap P_m \cap Q_1 \cap \cdots \cap Q_n
\]
a primary decomposition of $0$ in  $R$, ordered so that the prime ideals $\fp_i\coloneq \sqrt{P_i}$ and $\fq_j\coloneq \sqrt{Q_j}$ satisfy
\[
\dim(R/\fp_i) = \dim R\quad \text{and}\quad \dim(R/\fq_j) < \dim R\\,
\]
then $j(R) = P_1 \cap \cdots \cap P_m$. Since the images of the $P_i$'s give a primary decomposition of $0$ in $R/j(R)$,
we have that $R/j(R)$ is equidimensional, with $\dim R/j(R) = \dim R$, and has no embedded primes. Moreover, since $\dim j(R) < \dim R$, we have $e(R/j(R)) = e(R)$. In fact, $R/j(R)$ is the smallest quotient of $R$ having these
properties. Note also that for any prime $\fp \in \spec R \setminus V$ where $V = V(\fq_1)\cup \cdots \cup V(\fq_n)$, the natural quotient map $R\to R/j(R)$ induces an isomorphism  $R_\fp \cong (R/j(R))_\fp$.

\begin{lemma} 
\label{lem:jR} 
For any local ring $R$, there is a bijection between the isomorphism classes of maximal Cohen-Macaulay (respectively, Ulrich) modules on $R$ and maximal Cohen-Macaulay (respectively, Ulrich) modules on $R/j(R)$ given by restriction of  scalars.
     \end{lemma}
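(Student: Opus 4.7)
The plan is to establish that any MCM (respectively, Ulrich) $R$-module $M$ is automatically annihilated by $j(R)$, so that $M$ canonically inherits an $R/j(R)$-module structure. Combined with the obvious direction --- restriction of scalars sending an $R/j(R)$-module to an $R$-module --- this produces the claimed bijection, once I verify that MCM and Ulrich are preserved in both directions. But this preservation is formal: depth, multiplicity, minimal number of generators, and Krull dimension are intrinsic numerical invariants that agree whether $M$ is viewed over $R$ or over $R/j(R)$. Specifically, depth is measured by regular sequences in $\fm$, which surjects onto the maximal ideal of $R/j(R)$; once $j(R) M = 0$ one has $\fm M = (\fm/j(R))M$, so $\nu_R(M) = \nu_{R/j(R)}(M)$; the Hilbert function $n \mapsto \length(M/\fm^n M)$ is identical on both sides, and $\dim M = \dim R = \dim R/j(R)$, so $e_R(M) = e_{R/j(R)}(M)$. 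Hence the MCM condition $\depth_R M = \dim R$ and the Ulrich condition $\nu_R(M) = e_R(M)$ transfer between $R$ and $R/j(R)$.

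For the main step, I would fix $m \in M$ and set $I = \operatorname{ann}_R(m)$, so that $R/I \hookrightarrow M$ and $\operatorname{Ass}(R/I) \subseteq \operatorname{Ass}_R(M)$. Because $M$ is MCM, $\depth_R M = d = \dim R$, so every $\fp \in \operatorname{Ass}_R(M)$ has coheight $d$ and is therefore a minimal prime of $R$; consequently $\operatorname{Ass}_R(M) \subseteq \{\fp_1, \dots, \fp_m\}$. Take a primary decomposition $I = I_1 \cap \cdots \cap I_s$ with $\sqrt{I_k} = \fp_{a_k}$ for some $a_k \in \{1, \dots, m\}$. Each $\fp_{a_k}$ is simultaneously a minimal prime of $R$ and a minimal associated prime of $R/I$ (the latter because the primes in $\operatorname{Ass}(R/I)$ are pairwise incomparable minimal primes of $R$), so its primary component is unambiguously $I_k = I R_{\fp_{a_k}} \cap R$. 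In particular $I_k \supseteq 0 \cdot R_{\fp_{a_k}} \cap R = P_{a_k} \supseteq j(R)$. Therefore $j(R) \subseteq \bigcap_k I_k = I$, giving $j(R) \cdot m = 0$; as $m$ was arbitrary, $j(R) M = 0$.

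The hard part of this argument is precisely the primary decomposition step: using the top-dimensional components of $0 \subseteq R$ to annihilate every MCM module, via the observation that MCM pins $\operatorname{Ass}_R(M)$ to minimal primes of $R$ of maximal coheight. Once this is in hand, the Ulrich case requires no additional effort, since being Ulrich is just MCM plus a numerical equality whose two sides are preserved by the quotient $R \twoheadrightarrow R/j(R)$.
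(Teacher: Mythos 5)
Your proof is correct and follows essentially the same route as the paper: both reduce to showing $j(R)M = 0$ for every maximal Cohen--Macaulay $M$, using that $\depth_R M = \dim R$ forces $\operatorname{Ass}_R(M)$ to consist of the top-dimensional minimal primes, at which $j(R)$ localizes to zero, and then transfer the MCM and Ulrich conditions across $R \twoheadrightarrow R/j(R)$ by the invariance of depth, $\nu$, and $e$. The only difference is in execution of the annihilation step: the paper notes that the submodule $j(R)M \subseteq M$ satisfies $\operatorname{Ass}(j(R)M) \subseteq \operatorname{Ass}_R(M) \subseteq \{\fp_1,\dots,\fp_m\}$ and vanishes after localizing at each $\fp_i$, hence is zero, whereas you argue elementwise via primary decompositions of annihilators; both are valid instances of the same idea.
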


     \begin{proof} 
     Set $J = j(R)$ and $\bar R = R/J$.
       Every maximal Cohen-Macaulay module over $\bar R$ is also maximal Cohen-Macaulay over $R$ since
      $\dim (\bar R) = \dim R$.  For any $\bar R$-module $U$, it is clear from definitions that $\nu_R(U) = \nu_{\bar R}(U)$ and
      $e_R(U) = e_{\bar R}(U)$. In particular, if $U$ an Ulrich
      $\bar R$-module then it is   also Ulrich as an $R$-module.
      
      It remains to show that each maximal Cohen-Macaulay $R$-module $M$ satisfies $JM = 0$.  Since
     $JM\subset M$ the associated primes of $JM$ are subset of those of $M$, and hence a subset of $\{\fp_1,\dots,\fp_m\}$, as $M$ is maximal Cohen-Macaulay. But $J_{\fp_i}=0$ for each $i$ since $\dim J < \dim R$, and thus $JM=0$.
\end{proof}

\begin{lemma} 
\label{le:vtl}
  Assume $R$ is universally catenary local ring (for example, a complete local ring) of dimension two and let $\bar R = R/j(R)$. 
  Set $E = \proj \agr_\fm(R)$ and $\bar E = \proj \agr_{\bar \fm} \bar R$.
  Let $E = E_1 \amalg E_0$ where $E_i$ is the union of the connected components of $E$ having dimension $i$.  
If $E_1$ is reduced     then $\bar E = E_1$.
\end{lemma}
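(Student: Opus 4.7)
The plan is to first identify $|\bar E|$ and $|E_1|$ as the same subset of $|E|$, and then use the reducedness hypothesis on $E_1$ to upgrade to a scheme-theoretic equality. The surjection $R \twoheadrightarrow \bar R$ induces a surjection of graded rings $\agr_\fm(R) \twoheadrightarrow \agr_{\bar\fm}(\bar R)$ and hence a closed immersion $\bar E \hookrightarrow E$. Since $\bar R$ is equidimensional of dimension two and inherits universal catenarity from $R$, \cite[Proposition~5.4.8]{Huneke/Swanson:2006} gives that $\bar E$ is equidimensional of dimension one; as $\dim E \le \dim R - 1 = 1$, every irreducible component of $\bar E$ is then a $1$-dimensional irreducible component of $|E|$.

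I next show that $|E_1|$ is exactly the union of the $1$-dimensional irreducible components of $|E|$. Indeed, any $0$-dimensional irreducible component $\{p\}$ of $|E|$ is, by maximality, not contained in any other irreducible component, so its complement in $|E|$ is a finite union of closed sets; hence $\{p\}$ is open, making $p$ a singleton connected component of $|E|$ which lies in $|E_0|$. It follows that any connected component of $|E|$ of dimension one contains only $1$-dimensional irreducible components of $|E|$; conversely, using $\dim E \le 1$, every $1$-dimensional irreducible component of $|E|$ sits in a connected component of dimension one. Combined with the previous paragraph this yields $|\bar E| \subseteq |E_1|$.

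For the reverse inclusion I use the strict transform decomposition $|B| = \bigcup_\fp |B_\fp|$, where $\fp$ runs over the minimal primes of $R$ and $B_\fp \hookrightarrow B$ is the strict transform of $V(\fp) \subseteq \spec R$ (equivalently, the blow-up of $\spec R/\fp$ along $\fm/\fp$); intersecting with $E$ gives $|E| = \bigcup_\fp \proj\bigl(\agr_{\fm/\fp}(R/\fp)\bigr)$ with each summand of dimension $\dim R/\fp - 1$. A $1$-dimensional irreducible component of $|E|$ must then lie inside $\proj(\agr_{\fm/\fp_i}(R/\fp_i))$ for some $\fp_i$ with $\dim R/\fp_i = 2$. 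Since $j(R) \subseteq P_i \subseteq \fp_i$ by construction, the surjection $R \twoheadrightarrow R/\fp_i$ factors through $\bar R$, whence the closed immersion $\proj(\agr_{\fm/\fp_i}(R/\fp_i)) \hookrightarrow E$ factors through $\bar E$. Hence every $1$-dimensional irreducible component of $|E|$ lies in $|\bar E|$, and $|\bar E| = |E_1|$.

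Finally, since $E_1$ is open-and-closed in $E$ and $|\bar E| \subseteq |E_1|$, the closed immersion $\bar E \hookrightarrow E$ factors as a closed immersion $\bar E \hookrightarrow E_1$, cut out by some ideal $\mci \subseteq \mco_{E_1}$ satisfying $V(\mci) = |\bar E| = |E_1|$. Thus $\sqrt{\mci}$ agrees with the nilradical of $\mco_{E_1}$, which vanishes by the hypothesis that $E_1$ is reduced; it follows that $\mci = 0$ and $\bar E = E_1$. The main obstacle I anticipate is the reverse topological inclusion $|E_1| \subseteq |\bar E|$, which rests on the strict-transform description of the irreducible components of the blow-up $B$ together with the fact that $j(R)$ is contained in every $2$-dimensional minimal prime of $R$.
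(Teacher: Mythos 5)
Your proof is correct, and for the key containment it takes a genuinely different route from the paper. The first half agrees with the paper's argument: both deduce from universal catenarity and equidimensionality of $\bar R$ that $\agr_{\bar\fm}(\bar R)$ is equidimensional, hence that $\bar E$ is equidimensional of dimension one and therefore a closed subscheme of $E_1$. For the reverse direction the paper compares Hilbert polynomials: since $\dim R=\dim\bar R$ and $e(R)=e(\bar R)$, the kernel $I$ of $\agr_\fm(R)\twoheadrightarrow\agr_{\bar\fm}(\bar R)$ satisfies $\dim I<2$, so the corresponding ideal sheaf on $E_1$ has zero-dimensional support and must vanish on each affine open because $E_1$ is reduced and equidimensional of dimension one. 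You instead first prove the topological equality $|\bar E|=|E_1|$ by decomposing $|E|$ into the exceptional fibers of the strict transforms of the $V(\fp)$, noting that only the minimal primes with $\dim R/\fp=2$ contribute one-dimensional pieces and that these primes contain $j(R)$, so their contributions factor through $\bar E$; reducedness of $E_1$ then forces the ideal cutting out $\bar E$ in $E_1$, whose vanishing locus is all of $|E_1|$, to be zero. Your route avoids Hilbert polynomials and multiplicities entirely, but leans on the strict-transform description of the components of the blow-up (which implicitly uses that $\mci_E$ is an invertible ideal, so that no component of $B$ lies inside $E$, a fact the paper records in Section~\ref{se:Ulrich-things}); the paper's route is shorter once the multiplicity bookkeeping is accepted and never leaves $E$ itself. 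Both arguments invoke the reducedness hypothesis only at the final step, and both are complete.
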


\begin{proof}
Let   $S\coloneq \agr_\fm(R)$ and $\bar S\coloneq \agr_{\bar \fm} \bar R$.  Since $R$ is universally catenary, so is $\bar R$, and so since $\bar R$ is equidimensional, 
we have that  $\bar S$ is also equidimensional; see \cite[Proposition~5.4.8]{Huneke/Swanson:2006}. So, $\bar E$ has no connected components of dimension
$0$; that is, $\bar E$ is a closed subscheme of $E_1$. 

Suppose $E_1$ is reduced.
Since $R$ and $\bar R$ have the same dimension and  multiplicity, the Hilbert polynomials of the graded rings $S$ and $\bar S$
have the same degree and leading coefficient. It follows that the kernel $I$ of the canonical surjection $S \twoheadrightarrow \bar S$  satisfies $\dim I < \dim S = \dim \bar S = 2$. Sheafifying, this gives that the kernel $\mci$ of $\mco_{E} \twoheadrightarrow \mco_{\bar E}$ is given by an ideal of dimension $0$ on each open affine subset. 
  Let $\spec A = U$ be any affine open subset of $E_1$. Since $A$ is reduced $\dim I = \dim A$ for all non-zero ideals
  $I$. Thus $\mci(U) = 0$ and $U \cap \bar E = U$ for all such $U$. 
\end{proof}

The following generalizes Theorem \ref{thm:curves}:

\begin{theorem} 
\label{thm:curves2}
  Let $R$ be a complete local ring of dimension two such that each connected component of $E = \proj \agr_\fm(R)$ of dimension
  one is geometrically reduced.  Then $R$ admits an Ulrich module that is locally free of constant rank on $\spec R \setminus V$ where
  $V = \{\fm\} \cup \{\fq \mid \text{ $\fq$ is minimal with $\dim R/\fq = 1$}\}$.
  \end{theorem}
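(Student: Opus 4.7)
The plan is to reduce the non-equidimensional situation to Theorem~\ref{thm:curves} by passing to the quotient $\bar R \coloneqq R/j(R)$. Since $R$ is complete it is universally catenary, and $\bar R$ is complete, equidimensional of dimension two, with $e(\bar R) = e(R)$. The one-dimensional connected components of $E$ are geometrically reduced by hypothesis, hence $E_1$ is reduced, so Lemma~\ref{le:vtl} applies and yields the identification $\bar E = E_1$, where $\bar E \coloneqq \proj \agr_{\bar\fm}(\bar R)$. In particular $\bar E$ is geometrically reduced.

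With this identification in hand, Theorem~\ref{thm:curves} applied to $\bar R$ furnishes an Ulrich $\bar R$-module $U$ that is locally free of constant rank $r > 0$ on $\spec \bar R \setminus \{\bar\fm\}$, and Lemma~\ref{lem:jR} shows that restriction of scalars along $R \twoheadrightarrow \bar R$ endows $U$ with the structure of an Ulrich $R$-module.

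To verify the locally free condition on $\spec R \setminus V$, let $\fp$ be in this set. Because $R$ is local of dimension two and each $\fq_j$ is a one-dimensional minimal prime, the only primes of $R$ containing $\fq_j$ are $\fq_j$ itself and $\fm$, both of which lie in $V$. Hence $\fp$ cannot lie over some $\fq_j$ alone, so $\fp$ must contain a two-dimensional minimal prime $\fp_i$, and therefore $\fp \supset j(R)$. Thus $\fp$ corresponds to a non-maximal prime $\bar\fp \in \spec \bar R$, and $U_\fp \cong U_{\bar\fp}$ is free of rank $r$ over $R_\fp = \bar R_{\bar\fp}$, as required.

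The one nontrivial ingredient is the identification $\bar E = E_1$ supplied by Lemma~\ref{le:vtl}; everything else is bookkeeping, resting on the simple but essential observation that in a local ring of dimension two no prime sits strictly between a one-dimensional minimal prime and the maximal ideal, so the inclusion $\spec R \setminus V \subset V(j(R)) \setminus \{\fm\}$ ensures the rank $r$ really is constant and positive on the stated locus. (If $k$ is finite, a standard faithfully flat base change to an infinite residue field extension, as in the proof of Theorem~\ref{thm:LC}, allows us to apply Theorem~\ref{thm:curves}.)
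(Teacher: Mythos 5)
Your route is the same as the paper's: pass to $\bar R = R/j(R)$, use Lemma~\ref{le:vtl} to identify $\bar E$ with $E_1$ (hence geometrically reduced), apply Theorem~\ref{thm:curves} to $\bar R$ to get an Ulrich module $U$ that is locally free of constant rank on the punctured spectrum of $\bar R$, and transport it back via Lemma~\ref{lem:jR}. That architecture is correct and matches the paper exactly.

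The one step that is mis-justified is the identification $R_\fp = \bar R_{\bar\fp}$. You infer it from $\fp \supset j(R)$, but containment only yields a surjection $R_\fp \twoheadrightarrow \bar R_{\bar\fp}$; equality requires $j(R)_\fp = 0$, which is not a consequence of $\fp \in V(j(R))$ (for instance $\fm \supset j(R)$ while $R_\fm \ne \bar R_{\bar\fm}$ whenever $j(R) \ne 0$). The correct justification is the one the paper records just before Lemma~\ref{lem:jR}: writing $0 = P_1 \cap \cdots \cap P_m \cap Q_1 \cap \cdots \cap Q_n$ with $j(R) = P_1 \cap \cdots \cap P_m$, if $\fp$ contains none of the $\fq_j = \sqrt{Q_j}$ then each $(Q_j)_\fp$ is the unit ideal and hence $j(R)_\fp = 0$. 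Your own observation that no prime lies strictly between a one-dimensional prime and $\fm$ is exactly what shows $\fp \notin V(\fq_j)$ for $\fp \notin V$ when $\fq_j$ is a minimal prime of dimension one, so the repair is immediate; but note this requires every $\fq_j$ of dimension one to be minimal (so that it belongs to $V$) --- if $R$ had an embedded associated prime $\fq$ of dimension one, then $\fq \in \spec R \setminus V$ and $U_\fq$ need not be free over $R_\fq$. This subtlety is present in the paper's proof as well, and is implicitly excluded by the reducedness hypothesis on $E_1$. Lastly, your parenthetical for finite $k$ needs a supplement: an Ulrich module over the flat extension $R'$ does not automatically descend to $R$; one should observe it is defined over a module-finite free subextension and restrict scalars along it.
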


  \begin{proof} 
    Let $\bar R = R/j(R)$, set $E = \proj \agr_{\fm}(R)$, and let $E_1$ be the union of the connected components of $E$ having dimension one. By assumption, $E_1$ is geometrically reduced, and so in particular it is reduced, and thus by Lemma \ref{le:vtl}, $\bar E = E_1$. It follows that $\bar E$ is geometrically reduced.  Theorem \ref{thm:curves} thus applies to give the existence of an Ulrich module $U$ on $\bar R$ that is locally free of constant rank on the punctured spectrum. The result now follows from Lemma \ref{lem:jR} and the fact that $R_\fp \cong \bar R_\fp$ for all primes $\fp$ not in $V$.
  \end{proof}

  We next generalize Theorem \ref{thm:LC} to allow for rings that may not be equidimensional. A straightforward generalization is not available, since $\chi(F)$ can  be negative for $F$ as in \emph{op.~cit.} if the ring $R$ is not equidimensional; here is a simple example.

  \begin{example} 
  Let $k$ be a field and $R = k[[x,y,z]]/(xz, yz)$. This ring is not equidimensional, as its minimal primes are $\fp = (x,y)$ and $\fq = (z)$ with $\dim(R/\fp) =1$ and  $\dim(R/\fq) =2$. Fix an integer $n\ge 1$ and let $F$ be the finite free complex
\[
0 \longrightarrow R
\xrightarrow{\begin{bmatrix} -y \\ x-z^n \\ \end{bmatrix}} R^2  \xrightarrow{\begin{bmatrix} x + z &  y \end{bmatrix}} R \longrightarrow 0\,.
\]
Writing $K(\bs{x};M)$ for the Koszul complex on a sequence $\bs{x}$ with coefficients in an $R$-module $M$, one has isomorphisms of complexes
\[
F/\fp F \cong K(z;k[[z]]) \oplus \Sigma K(z^n;k[[z]])\quad\text{and}\quad
F/\fq F \cong K(x,y; k[[x,y]])\,.
\]
Thus $F$ has finite length homology, with $\chi(F/\fp F) = 1-n$ and $\chi(F/\fq F) = 1$. Since the cokernel $C$ of the canonical injection $R \hookrightarrow R/\fp \oplus R/\fq$ has finite length, one has $\chi(F \otimes_R C) = 0$ and hence 
\[
\chi(F) = \chi(F/\fp F) + \chi(F/\fq F) = 2-n
\]
which is negative for $n \ge 3$.
\end{example}

The correct invariant to use for rings that need not be equidimensional is 
 the \emph{Dutta multiplicity}, $\chi_\infty(F)$, whose definition is recalled below. In the previous example, $\chi_\infty(F) = 1$.

Let $R$ be a local ring that can be written as the quotient of a regular local ring; this holds, for example, when $R$ is a complete local ring. The rationalized Grothendieck group of $R$ admits a \emph{weight decomposition} 
\[
G_0(R)_\bbQ = \bigoplus_{i=0}^{\dim R} G_0(R)_{(i)}
\]
arising from the Riemann-Roch isomorphism
\[
\tau\colon \mathrm{CH}_*(R)_\bbQ \xrightarrow{\cong} G_0(R)_\bbQ
\]
by setting $G_0(R)_{(i)} = \tau( \mathrm{CH}_i(R)_\bbQ)$; see \cite{Kurano:2004}. Given a finitely generated $R$-module $M$, one can then decompose its class in $G_0(R)$ as 
\[
[M] = \sum_i [M]_{(i)}\,, \quad \text{with $[M]_{(i)}$ in $G_0(R)_{(i)}$.}
\]
We call $M_{(i)}$ the \emph{weight $i$ component} of $[M]$.

Recall that $E(R)_\bbQ$ is the quotient of the rationalized Grothendieck group $G_0(R)_\bbQ$ by classes that are numerically equivalent to zero.  The weight decomposition descends to $E(R)_\bbQ$, giving a decomposition
\[
E(R)_\bbQ  = \bigoplus_{i=0}^{\dim R} E(R)_{(i)}.
\]
These decompositions enjoy the following properties; see \cite[Proposition~3.7]{Kurano:2004} for proofs. We write $\minspec R$ for the set
of minimal prime ideals of $R$. 

\begin{proposition}  
\label{prop:wd}
  Let $R$ be local ring that is a homomorphic image of a regular local ring. Then:
\begin{enumerate}[\quad\rm(1)]
\item For a finitely generated $R$-module $M$, we have $[M]_{(i)} = 0$ for $i > \dim_RM$.
\item When $\chr(R) = p>0$ and $R$ is $F$-finite with perfect residue field, 
  $G_0(R)_{(i)}$ is the eigenspace of eigenvalue $p^i$ for the operator on $G_0(R)_\bbQ$ induced by restriction of scalars along the Frobenius endomorphism.
\item With $V = \{\fp \in \spec R \mid \dim(R/\fp) =\dim R\}$,  the map 
\[
E(R)_{(\dim R)} \xrightarrow{\cong} \bigoplus_{\fp \in V} \bbQ
\]
sending $M$ to $(\length_{R_\fp}M_\fp)_{\fp \in V}$ is an isomorphism.
\item 
When $R$ is equidimensional of dimension two,  $E(R)_{(0)} =0= E(R)_{(1)}$.
\end{enumerate}
\end{proposition}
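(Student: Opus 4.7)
The plan is to derive all four items from the Riemann--Roch isomorphism $\tau$ and standard properties of Chow groups, following Kurano. First, I would recall the codimension filtration on $G_0(R)_\bbQ$, whose $i$-th piece $F_i$ is spanned by classes $[M]$ with $\dim_R M \le i$; Riemann--Roch identifies $\mathrm{CH}_i(R)_\bbQ$ with the associated graded $F_i/F_{i-1}$, and the definition $G_0(R)_{(i)} = \tau(\mathrm{CH}_i(R)_\bbQ)$ splits this filtration so that $\bigoplus_{j \le i} G_0(R)_{(j)} = F_i$. Part (1) is then immediate, since $[M] \in F_{\dim_R M}$ forces its weight-$j$ components to vanish for $j > \dim_R M$.

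For part (2), I would invoke Kunz's computation: in the $F$-finite setting, restriction of scalars along the $e$-fold Frobenius acts on $\mathrm{CH}_i(R)_\bbQ$ by multiplication by $p^{ei}$. Transporting through $\tau$ shows $G_0(R)_{(i)}$ sits inside the $p^i$-eigenspace of the induced Frobenius operator on $G_0(R)_\bbQ$; since the eigenvalues $p^0, p^1, \dots, p^{\dim R}$ are pairwise distinct and the weight decomposition exhausts $G_0(R)_\bbQ$, equality of each weight piece with the corresponding eigenspace follows.

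For part (3), $\mathrm{CH}_{\dim R}(R)_\bbQ$ is freely generated over $\bbQ$ by $\{[\spec(R/\fp)]\}_{\fp \in V}$, and the functional $\length_{R_\fp}((-)_\fp)$ sends $[R/\fp']$ to $\delta_{\fp,\fp'}$, so at the level of $G_0$ the map is already an isomorphism on the weight-$\dim R$ component. The content to verify is that it descends to an isomorphism on $E(R)_{(\dim R)}$: a top-weight class with vanishing lengths at all $\fp \in V$ must pair to zero against every test complex $F$ with finite length homology. This I would obtain from the local Riemann--Roch pairing that expresses $\chi(F, M)$ via the Chern character of $F$ acting on $\tau^{-1}[M]$, of which only the top-dimensional component survives against a top-weight class.

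Part (4) is the crux and the main obstacle. In characteristic $p > 0$, one can argue directly from (2): Dutta's limit $\chi_\infty(F) = \lim_{e \to \infty} p^{-e \dim R} \chi(F, {}^{e}R)$ isolates the top-weight contribution, so weight-$j$ classes for $j < \dim R$ get rescaled by $p^{ej - e \dim R} \to 0$ in the limit, forcing their numerical triviality in the equidimensional setting. In general characteristic one reduces to characteristic $p$, or, in dimension two, argues directly: a class in $G_0(R)_{(0)}$ is a multiple of $[k]$, and $\chi(F, k) = \sum_i (-1)^i \rank F_i$ vanishes for every minimal $F$ with finite length homology on an equidimensional ring of dimension two by the sharp form of the New Intersection Theorem; for $G_0(R)_{(1)}$, numerical triviality of $[R/\fp]$ at a height-one prime $\fp$ reduces to Serre's intersection vanishing for modules of complementary dimension, which is known in dimension $\le 2$. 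The equidimensionality hypothesis enters precisely in ruling out contributions from lower-dimensional components that would otherwise survive numerical equivalence.
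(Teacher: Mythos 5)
The paper does not actually prove this proposition: the statement is quoted from Kurano and the ``proof'' in the text is just the citation to \cite[Proposition~3.7]{Kurano:2004}. So your attempt necessarily supplies arguments the paper omits. Parts (1) and (2) of your sketch are fine: (1) is the standard top-term formula for the Riemann--Roch map, which sends $[M]$ to $\sum_{\fp}\length_{R_\fp}(M_\fp)[\spec R/\fp]$ plus lower-dimensional terms, and (2) follows as you say from the action of Frobenius pushforward on $\mathrm{CH}_i(R)_\bbQ$ by $p^{ei}$ together with the linear-algebra observation about distinct eigenvalues.

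Parts (3) and (4) have genuine gaps. For (3), the entire content is that no nonzero class in $G_0(R)_{(\dim R)}$ is numerically trivial --- equivalently, that the functionals $\chi(F,-)$ separate the classes $[R/\fp]_{(\dim R)}$ for $\fp \in V$. What you propose to verify is the opposite, and in fact vacuous, implication: a top-weight class with vanishing lengths at all $\fp\in V$ is already zero, so of course it pairs to zero against everything. You still need to produce, for each top-dimensional minimal prime, test complexes that detect it (Kurano does this using Koszul complexes on parameter ideals and the associativity formula for multiplicities). For (4), both branches of your argument fail. The Frobenius/Dutta-limit argument only shows that lower-weight classes contribute nothing to $\chi_\infty(F,-)$; numerical equivalence is defined by vanishing of $\chi(F,-)$, not of $\chi_\infty(F,-)$, and the exact identity $\chi(\phi^eF,\alpha)=p^{ei}\chi(F,\alpha)$ for $\alpha$ of weight $i$ is perfectly consistent with $\chi(F,\alpha)\neq 0$. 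Indeed, the Dutta--Hochster--McLaughlin example exhibits, over a three-dimensional hypersurface, a lower-weight class that is \emph{not} numerically trivial even though it is invisible to $\chi_\infty$; so any correct proof of (4) must use $\dim R=2$ in an essential way. Your appeal to ``Serre's intersection vanishing, known in dimension $\le 2$'' does not supply this: Serre vanishing is a theorem about regular rings (or about pairs of perfect complexes), whereas here $R$ is singular and $R/\fp$ has infinite projective dimension, so the vanishing $\chi(F,R/\fp)=0$ for height-one $\fp$ is precisely the nontrivial assertion being proved. (Your treatment of $E(R)_{(0)}$ is fine, and is even easier than you make it: $\mathrm{CH}_0(R)_\bbQ=0$ whenever $\dim R>0$, so $G_0(R)_{(0)}=0$ outright.)
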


\begin{definition} 
  Let $R$ be a complete local ring $R$. The \emph{Dutta multiplicity} of a finite free $R$-complex $F$ having finite length homology
  is
  \[
\chi^R_\infty(F) =  \chi(F, [R]_{(\dim R)}).
  \]
  For an arbitrary local ring $R$, we define
  \[
  \chi^R_\infty(F) =  \chi^{\hat{R}}_\infty(\hat{R} \otimes_R F).
  \]
  \end{definition}

  \begin{remark}
When $R$  is a complete local ring of positive characteristic $p$ with perfect residue field, one has
\[
    \chi_\infty(F) = \lim_{e \to \infty}\frac{\chi(\phi^e F)}{p^{de}}
  \]
 where $\phi^e$ is the $e$-th iterate of extension of scalars along the Frobenius; for a proof, see, for instance,  \cite[pp.~429]{Roberts:1989}.
  \end{remark}

  The following is an immediate consequence of part (4) of Proposition \ref{prop:wd}. 

  \begin{corollary}  
  \label{cor:chi}
    If $R$ has dimension two and its completion is equidimensional, then 
    $\chi_\infty(F) = \chi(F)$. \qed
\end{corollary}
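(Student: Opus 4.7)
The plan is to reduce immediately to the complete case, then use the weight decomposition of $[\hat R]$ in $G_0(\hat R)_\bbQ$ and the vanishing of the low-weight pieces in $E(\hat R)_\bbQ$ supplied by Proposition~\ref{prop:wd}(4).

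First, since completion is faithfully flat and preserves finite-length homology, $\chi^R(F) = \chi^{\hat R}(\hat R \otimes_R F)$, and by definition $\chi_\infty^R(F) = \chi_\infty^{\hat R}(\hat R \otimes_R F)$. So I can replace $R$ by $\hat R$ and assume from the start that $R$ is complete and equidimensional of dimension two. Now in $G_0(R)_\bbQ$ the weight decomposition gives
\[
[R] = [R]_{(0)} + [R]_{(1)} + [R]_{(2)},
\]
and applying $\chi(F,-)$ yields
\[
\chi(F) = \chi(F,[R]_{(0)}) + \chi(F,[R]_{(1)}) + \chi(F,[R]_{(2)}).
\]
The last term is $\chi_\infty(F)$ by definition, so it suffices to show the first two summands vanish.

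For this I invoke Proposition~\ref{prop:wd}(4): since $R$ is equidimensional of dimension two, $E(R)_{(0)} = 0 = E(R)_{(1)}$, meaning the images of $[R]_{(0)}$ and $[R]_{(1)}$ in $E(R)_\bbQ$ are zero, i.e.\ these classes are numerically equivalent to zero. By the very definition of $E(R)_\bbQ$, this means $\chi(F,[R]_{(0)}) = 0 = \chi(F,[R]_{(1)})$ for every finite free complex $F$ with finite-length homology. Substituting back gives $\chi(F) = \chi_\infty(F)$.

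There is no serious obstacle here; the only mild subtlety is the reduction to the complete case, which requires checking that the definition of $\chi_\infty$ is compatible with completion (this is built into the definition) and that $\chi$ itself is preserved by completion (which follows from faithful flatness together with the finite-length-homology hypothesis). Everything else is a direct application of the machinery already summarized in Proposition~\ref{prop:wd}.
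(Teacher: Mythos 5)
Your argument is correct and is exactly the one the paper intends: the paper dispatches this corollary as an immediate consequence of Proposition~\ref{prop:wd}(4), and you have simply written out the details — reduction to the complete case, the weight decomposition of $[R]$, and the vanishing of $\chi(F,[R]_{(0)})$ and $\chi(F,[R]_{(1)})$ because those classes are numerically equivalent to zero. Nothing is missing and nothing differs in substance from the paper's (unwritten) proof.
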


\begin{theorem}  
\label{thm:LC2}
Let $(R, \fm)$ be a local ring of dimension two. If the connected components of $\proj(\agr_\fm(R))$ of dimension one are geometrically reduced, then for any finite free $R$-complex $F= 0 \to F_2 \to F_1 \to F_0 \to  0$ that is minimal and has
non-zero, finite length homology, one has 
\[
\chi_\infty(F) \geq e(R)\cdot \max\left\{\rank F_0,  \frac{\rank F_1 }{2}, \rank F_2 \right\}.
\]
\end{theorem}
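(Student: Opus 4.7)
The plan is to mirror the proof of Theorem~\ref{thm:LC}, making two substitutions: invoke Theorem~\ref{thm:curves2} in place of Theorem~\ref{thm:curves}, and pass through the equidimensional quotient $\bar R \coloneqq R/j(R)$ so that the equidimensional identity for $\chi$ translates into one for $\chi_\infty$ on $R$. As a first reduction one may assume $R$ is complete with infinite residue field: by definition $\chi_\infty^R(F) = \chi_\infty^{\hat R}(F\otimes_R \hat R)$, and a faithfully flat integral extension with infinite residue field preserves both $\chi_\infty$, $e(R)$, and the geometric reducedness of the dimension-one components of $\proj(\agr_\fm R)$, exactly as in Theorem~\ref{thm:LC}. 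Under these reductions Lemma~\ref{le:vtl} yields that $\bar R$ is equidimensional of dimension two with $\proj(\agr_{\bar\fm}\bar R)$ geometrically reduced. Thus Theorem~\ref{thm:curves2} supplies an Ulrich $R$-module $U$ that is locally free of constant rank $r>0$ at each prime $\fp$ with $\dim R/\fp=2$; by Lemma~\ref{lem:jR}, $U$ is also an Ulrich $\bar R$-module.

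The technical heart of the proof is the identity
\[
\chi^R(F\otimes_R U) \;=\; r\cdot \chi_\infty^R(F).
\]
Setting $F_{\bar R}\coloneqq F\otimes_R \bar R$, which is a finite free $\bar R$-complex with finite length homology (since $F$ is exact away from $\fm$), I would chain together
\[
\chi^R(F\otimes_R U) \;=\; \chi^{\bar R}(F_{\bar R}\otimes_{\bar R} U) \;=\; r\cdot \chi^{\bar R}(F_{\bar R}) \;=\; r\cdot \chi_\infty^{\bar R}(F_{\bar R}),
\]
using in order that $U$ is an $\bar R$-module, Proposition~\ref{prop:E} applied to $\bar R$ (giving $[U]=r[\bar R]$ in $E(\bar R)_\bbQ$), and Corollary~\ref{cor:chi} for the equidimensional ring $\bar R$. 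The remaining identification $\chi_\infty^{\bar R}(F_{\bar R}) = \chi_\infty^R(F)$ is handled as follows. The short exact sequence $0\to j(R)\to R\to\bar R\to 0$, tensored termwise with $F$, produces $\chi^R(F) = \chi^R(F\otimes_R j(R)) + \chi^{\bar R}(F_{\bar R})$. Combining with $\chi_\infty^R(F) = \chi^R(F,[R]_{(2)})$ and $[j(R)]_{(2)}=0$ (Proposition~\ref{prop:wd}(1), since $\dim j(R)<2$), the desired identity reduces to the vanishing of $\chi^R(F,-)$ on the weight-$0$ and weight-$1$ components of $[\bar R]$. This vanishing follows from the compatibility of the Riemann–Roch weight decomposition with push-forward along the closed immersion $\spec \bar R\hookrightarrow\spec R$, together with Proposition~\ref{prop:wd}(4), which ensures these components are numerically trivial over $\bar R$.

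With the key identity in hand, the rest of the argument proceeds exactly as in Theorem~\ref{thm:LC}. Since $U$ is maximal Cohen–Macaulay, $F\otimes_R U \simeq \hh_0(F\otimes_R U)$ and hence $\length_R \hh_0(F\otimes_R U) = \chi^R(F\otimes_R U)$; since $U$ is Ulrich, $K\otimes_R U\simeq k^{e_R(U)}$ for $K$ the Koszul complex on a minimal reduction of $\fm$. These give $K\otimes_R \hh_0(F\otimes_R U) \simeq k^{e_R(U)}\otimes_R F$, and minimality of $F$ yields $\binom{2}{i}\length_R \hh_0(F\otimes_R U) \geq e_R(U)\cdot \rank F_i$. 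Substituting the identity $\length_R \hh_0(F\otimes_R U) = r\,\chi_\infty^R(F)$ and the multiplicity computation $e_R(U) = r\cdot e(R)$, which follows from the associativity formula and the rank condition on $U$ at top-dimensional minimal primes, yields the claimed inequality.

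The main obstacle is the identification $\chi_\infty^{\bar R}(F_{\bar R}) = \chi_\infty^R(F)$. In contrast to the equidimensional case, one cannot rely on a naive Roberts-type vanishing here: the example preceding the theorem exhibits a finite free complex $F$ and a module $M$ with $\dim M<\dim R$ for which $\chi^R(F,M)\neq 0$. The argument must therefore explicitly track the weight-$2$ component, using Proposition~\ref{prop:wd}(3) to match top weights of $[R]$ and $[\bar R]$, Proposition~\ref{prop:wd}(1) to kill $[j(R)]_{(2)}$, and the compatibility of push-forward with the weight decomposition to transfer numerical vanishing from $\bar R$ back to $R$.
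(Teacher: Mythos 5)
Your proposal is correct and follows essentially the same route as the paper: reduce to a complete ring with infinite residue field, pass to $\bar R = R/j(R)$, use $[R]_{(2)} = [\bar R]_{(2)}$ (via Proposition~\ref{prop:wd}(1)) together with Corollary~\ref{cor:chi} and the push-forward compatibility of the weight decomposition to identify $\chi_\infty^R(F)$ with $\chi(\bar R\otimes_R F)$, and invoke Lemma~\ref{le:vtl} for geometric reducedness of $\proj(\agr_{\bar\fm}\bar R)$. The only cosmetic difference is that you unfold the Koszul-complex argument of Theorem~\ref{thm:LC} over $R$ with the Ulrich module from Theorem~\ref{thm:curves2}, whereas the paper simply cites Theorem~\ref{thm:LC} applied to $\bar R$ and $F\otimes_R\bar R$.
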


\begin{proof} 
As in the proof of Theorem~\ref{thm:LC}, we may assume $R$ is complete with infinite residue field. Let $\bar R = R/j(R)$, a complete, equidimensional local ring of dimension $2$ with  $e(R) = e(\bar R)$. As $\dim j(R) < 2$,
  we have $[R]_{(2)} = [\bar R]_{(2)}$ in $G_0(R)_\bbQ$ and thus
  \[
  \chi_\infty(F) = \chi_\infty( \bar R \otimes_R F) = \chi( \bar R \otimes_R F)\,.
  \]
  The second equality coming from Corollary \ref{cor:chi}. As in the proof of Theorem \ref{thm:curves2}, it follows from Lemma \ref{le:vtl} that $\proj (\agr_{\bar \fm} \bar R)$ is geometrically reduced. The inequality thus follows from Theorem \ref{thm:LC} applied to $\bar R$ and $F \otimes_R \bar R$. 
\end{proof}

\begin{ack}
The authors were partly supported by National Science Foundation  (NSF) grants DMS-2001368 and DMS-2502004 (SBI);  DMS-2302430 and DMS-2424441 (LM); DMS-2200732 and DMS-2502519 (MEW).

This material is partly based upon work supported by the NSF under Grant No. DMS-1928930 and by the Alfred P. Sloan Foundation under grant G-2021-16778, while the authors were in residence at the Simons Laufer Mathematical Sciences Institute (formerly MSRI) in Berkeley, California, during the Spring 2024 semester.  
\end{ack}

\bibliographystyle{amsplain}
\providecommand{\bysame}{\leavevmode\hbox to3em{\hrulefill}\thinspace}
\providecommand{\MR}{\relax\ifhmode\unskip\space\fi MR }
\providecommand{\MRhref}[2]{%
  \href{http://www.ams.org/mathscinet-getitem?mr=#1}{#2}
}
\providecommand{\href}[2]{#2}

\end{document}